\newtheorem{theorem}{Theorem}[section]
\newtheorem*{theoremn}{Theorem}
\newtheorem{proposition}{Proposition}[section]
\newtheorem{lemma}{Lemma}[section]
\newtheorem{definition}{Definition}[section]
\def\ie{{\em i.e.,\ }}
\def\eg{{\em e.g.\ }}
\def\SS{{\mathcal S}} %%% BETTER NOTATION?
\def\shape{\epsilon}
\newfont\bbf{msbm10 at 12pt}
\def\eps{\varepsilon}
\def\R{{\mathbb R}}
\def\P{{\mathcal P}}
\def\T{{\mathcal T}}
\def\htop{h_{top}}
\def\orb{\mbox{\rm orb}}
\begin{document}
\bibliographystyle{plain}
\title{On the structure of isentropes of polynomial maps}
\author{Henk Bruin  and Sebastian van Strien}
\date{Version of 26 February}
\maketitle
\begin{abstract}
The structure of isentropes (\ie level sets of constant topological entropy)
including the monotonicity of entropy, has been studied for polynomial interval maps
since the 1980s.
We show that isentropes of multimodal polynomial families need not be 
locally connected and that entropy does in general not depend monotonically
on a single critical value.
\end{abstract}

\section{Introduction and Statement of Results.}
Topological entropy $\htop$ was introduced by Adler et al.\ \cite{AKM}, 
and the for many purposes more practical approach using $\eps$-$n$-separated/spanning sets was developped by Bowen \cite{bowen}
and Dinaburg \cite{dinaburg}.
For interval maps $f:I \to I$, Misiurewicz \& Szlenk \cite{MSz}
showed that $\htop(f)$ represents the exponential growth rate of 
the number of periodic points\footnote{where it is assumed that for every $n$, every separate branch of $f^n$ contains a 
bounded number of $n$-periodic points. That this assumption is necessary
emerges \eg from \cite{KK}.}, the number of laps or the variation:
\begin{eqnarray}\label{eq:htop}
\htop(f) &=& \lim_{n\to\infty} \frac1n \log \# \{ x : f^n(x) = x \} \nonumber \\
&=& \lim_{n\to\infty} \frac1n \log \ell(f^n) \\
&=& \lim_{n\to\infty} \frac1n \log \mbox{Var}(f^n). \nonumber
\end{eqnarray}
Here the {\em lapnumber} $\ell(f^n)$ denotes the number of maximal intervals $J$ such that $f^n|_J$ is monotone,
and the {\em variation} is 
$$
\mbox{Var}(f) = \sup_k \sup_{x_0 < \dots < x_k} \sum_{j=1}^k |f(x_j)-f(x_{j-1})|.
$$

The question of how $\htop(f)$ depends on the map $f$ is a major theme within interval dynamics since 
numerical observations made  in the 1970's. The simplest question of this type is whether 
for the logistic family $f_a(x) = ax(1-x)$, the entropy $\htop(f_a)$ is increasing in $a$. 
 In the early 80's, this was proved by  Douady \& Hubbard \cite{Dou,DH}, Milnor \& Thurston  \cite{MT} and Sullivan, see  \cite{MS}. 
Another elegant proof was given by Tsujii \cite{Tsu}.
It is worth emphasizing that all known monotonicity proofs rely in some way on complex analysis, and so does the recent proof of van Strien \& Rempe for the sine family \cite{RvS}.
Milnor \& Tresser \cite{MTr} (see also \cite{Mil}) proved that entropy is also monotone for the family of {\em cubic} polynomials, and Radulescu \cite{Radu} proved the same thing for a certain two-dimensional {\em slice of quartic} polynomials.
In this case, the parameter space is two-dimensional, and then (as for all higher dimensional spaces)
monotonicity of entropy means that the {\em isentropes} (\ie sets of constant entropy) are connected.

For polynomials of {\em arbitrary degree} $d=b+1$ (\ie $b$-modal polynomials) monotonicity of entropy was proved recently in \cite{BS}.
More precisely, for fixed $\shape \in \{-1 , 1\}$ and define 
$$
\P^b_\shape = \left\{ f:[-1, 1] \to [-1,1] : 
\begin{array}{l}
f  \text{ is a polynomial of }\deg(f) = b+1, \\[1mm] 
f(-1) = \shape,\ f(1) \in \{ -1, 1\} \text{ and } f \text{ has } \\[1mm]
 b \text{ distinct critical points in } (-1,1) 
\end{array} \right\}.
$$

\begin{theoremn}[Milnor's Monotonicity Conjecture, see  \cite{BS}]
For each $h\ge 0$, the isentrope
$$
L_h = \{ f \in \P^b_\shape : \htop(f) = h \}
$$
is connected.
\end{theoremn}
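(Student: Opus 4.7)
The strategy I would pursue combines Milnor--Thurston kneading theory with quasiconformal deformation arguments in the complexified setting, in the spirit of Milnor--Tresser's treatment of cubics. First I parameterise $\P^b_\shape$ by the $b$-tuple of critical values $(v_1,\dots,v_b)$, which identifies it with an open region in $\R^b$ on which $\htop$ is continuous. Hence $L_h$ is automatically closed, and the real question is whether it can be disconnected by the complementary open set $\{f : \htop(f) \neq h\}$.

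The organising invariant is the kneading data $\K(f) = (K_1(f),\dots,K_b(f))$ given by the itineraries of the critical values. Milnor--Thurston tells us that $\htop(f)$ is a monotone function of $\K(f)$ under a suitable partial order, and, more importantly, that $\K$ is locally constant on the interior of any hyperbolic component of $\P^b_\shape$. The natural stratification of the parameter space is then by ``bones'': for each critical relation $\alpha$ (e.g.\ $f^n(c_i)=c_j$, $f^n(c_i) = \pm 1$, or $c_i$ periodic of some period) one has a real-analytic subvariety $B_\alpha \subset \P^b_\shape$ of positive codimension. Off the bones and inside hyperbolic components, $\htop$ is locally constant, so in the non-hyperbolic regime $L_h$ is essentially the union of those bones and bone intersections on which $\htop \equiv h$.

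The argument then splits into two halves. First I would show that each bone stratum on which entropy equals $h$ is connected: this is done by complexifying the polynomial, applying Thurston-style rigidity together with quasiconformal surgery to identify the stratum with a product of disks (or a polydisk slice), whose real part is a cell. Second, one must glue: two such strata that share a deeper degeneration need to be linked inside $L_h$. I would handle this inductively by a sequence of elementary combinatorial moves -- tuning and untuning of renormalisations, and local swaps of adjacent critical codes -- each realised geometrically as a path either along a bone or along a piece of a hyperbolic component on whose closure $\htop \equiv h$.

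The main obstacle is the gluing step. When several kneading sequences must be changed simultaneously while keeping $\htop$ constant, the monotonicity of entropy in $\K$ gives the rigidity needed to trap the path in $L_h$, but turning this into an actual continuous path requires substantial input from complex dynamics: complex bounds and puzzle-piece constructions for multimodal maps, together with rigidity of infinitely renormalisable combinatorics. This is what makes the case of general $b$ substantially harder than the unimodal and cubic cases, where the bone graph is low-dimensional and the combinatorics can be traversed by hand.
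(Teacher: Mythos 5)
You have sketched a strategy rather than a proof, and the step you yourself flag as ``the main obstacle'' --- gluing the bone strata inside $L_h$ --- is precisely where the argument is missing rather than merely hard. Two concrete problems. First, the assertion that $\htop$ is a monotone function of the kneading data $\K(f)$ ``under a suitable partial order'' is only unproblematic for unimodal maps; for $b\ge 2$ a single deformation of the polynomial changes several kneading coordinates simultaneously and in opposite senses, and there is no order on $b$-tuples of kneading sequences for which monotonicity is both true and strong enough to trap a path in $L_h$. Indeed, Theorem~\ref{thm:non-mono} of this paper shows that entropy is \emph{not} monotone in a single critical value of a polynomial, so the naive coordinate-wise version of your trapping argument fails in the very parametrisation you chose. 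Second, the bone stratification is the Milnor--Tresser approach, and it becomes combinatorially intractable beyond the cubic case: strata of deeper codimension proliferate, and the connectivity of an individual stratum $\{\htop=h\}\cap B_\alpha$ is essentially as hard as the theorem itself, so nothing has been reduced.

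The actual proof in \cite{BS} takes a different route that circumvents both difficulties. One introduces the stunted sawtooth family $\SS^b_\shape$ as a model parameter space: there entropy \emph{is} monotone in each stunting coordinate $\zeta_i$ (Proposition~\ref{prop:isentrope_sawtooth}), because shrinking a plateau only creates orbits and never destroys them, so the isentropes of $\SS^b_\shape$ are connected --- in fact cells --- by an elementary foliation argument. The content of the theorem is then the transfer of this connectivity from $\SS^b_\shape$ to $\P^b_\shape$, achieved via density of hyperbolicity and the rigidity theorem of \cite{KSS}: hyperbolic components of $\P^b_\shape$ are cells in entropy-preserving correspondence with components of the model family. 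This is where the complex-analytic input enters --- not in a stratum-by-stratum quasiconformal surgery, but in a global rigidity statement. If you want to salvage your outline, the missing idea is exactly this model family: replace ``monotone in $\K$'' by ``monotone in the stunted sawtooth parameters'' and replace the bone-gluing induction by the hyperbolic-component correspondence.
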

The main purpose of this paper is to
present two theorems which show that isentropes of $\P^b_\shape$ are nevertheless complicated sets:

\begin{theorem}[Non-monotonicity w.r.t.\ natural parameters]\label{thm:non-mono}
Let $f_v \in \P^b_\shape$ denote the polynomial map with critical values $v = (v_1, \dots, v_b)$.
For $b \ge 2$, there are fixed values of $v_2, \dots , v_b$ such that the map
$$
v_1 \mapsto \htop(f_v)
$$
is not monotone.
\end{theorem}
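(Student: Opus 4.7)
The plan is to focus first on the bimodal case $b=2$, where the parameter space $\P^2_\shape$ is two-dimensional; the higher-modality case will be reduced to it. After fixing $f(1)\in\{-1,1\}$, we parameterize $\P^2_\shape$ by the pair $(v_1,v_2)$ of critical values, ranging over a two-dimensional region. The Main Theorem cited from \cite{BS} tells us that each isentrope $L_h\subset\P^2_\shape$ is connected. I would first reformulate the goal geometrically: monotonicity of $v_1\mapsto\htop(f_{v_1,v_2^*})$ for every fixed $v_2^*$ is equivalent to every horizontal slice $L_h\cap\{v_2=v_2^*\}$ being either empty or an interval in $v_1$. So to refute monotonicity it suffices to exhibit an isentrope $L_h$ and a value $v_2^*$ for which $L_h\cap\{v_2=v_2^*\}$ has at least two components, separated by a parameter where $\htop\neq h$.

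The construction I would pursue is to produce two maps $f'=f_{(v_1',v_2^*)}$ and $f'''=f_{(v_1''',v_2^*)}$ with $v_1'<v_1'''$ whose bimodal kneading invariants $(K_1,K_2)$ differ but whose kneading-matrix determinants have the same smallest positive root $s=e^{-h}$, so that $\htop(f')=\htop(f''')=h$ by the Milnor--Thurston formula. Because the two kneading types differ, standard kneading theory rules out $\htop$ being constant across the entire segment $v_1\in[v_1',v_1''']$, so by continuity there exists an intermediate parameter $v_1''$ with $\htop(f_{(v_1'',v_2^*)})\neq h$. This yields the desired non-monotone profile along the slice, while the connectedness of $L_h$ (from the Main Theorem) guarantees that the two isentropic points genuinely lie on a common isentrope rather than coincidentally on distinct ones.

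For $b>2$, starting from a bimodal example I would extend to higher degree by assigning the additional critical values $v_3,\ldots,v_b$ so that the extra critical points $c_3,\ldots,c_b$ and all their forward orbits are trapped in a subinterval of $[-1,1]$ on which the dynamics contracts to a boundary fixed point. These trapped critical orbits contribute nothing to $\htop$, so the topological entropy of the $(b+1)$-degree map equals that of its bimodal core, and the non-monotonicity in $v_1$ is inherited. Alternatively one could perturb the bimodal example by introducing small new critical oscillations that create the additional critical points without affecting the essential dynamics to leading order.

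The hardest step will be the kneading-theoretic construction at $b=2$: exhibiting two explicit parameter values $v_1'<v_1'''$ with matching entropy but distinct kneading invariants on a common horizontal slice, and verifying that the entropy departs from $h$ at some intermediate parameter. This requires explicit manipulation of $2\times 2$ kneading matrices for bimodal cubics and control over the leading root of their determinants as $v_1$ sweeps along the slice $\{v_2=v_2^*\}$. Everything else --- the geometric reformulation, the passage from two isentropic parameters on a slice to non-monotonicity, the trapping argument that lifts the bimodal example to higher modality --- reduces to continuity of $\htop$ and the connectedness of $L_h$ supplied by the Main Theorem.
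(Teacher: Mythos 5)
Your global strategy---two parameters $v_1'<v_1'''$ on a fixed slice $\{v_2=v_2^*\}$ with the same entropy $h$, plus an intermediate parameter where the entropy differs---is a logically correct skeleton for non-monotonicity, and your reduction of higher modality to the bimodal case by trapping the extra critical orbits is consistent with the paper, which also only treats the cubic case explicitly. The problem is the step you rely on to produce the intermediate parameter. The assertion that ``because the two kneading types differ, standard kneading theory rules out $\htop$ being constant across the entire segment'' is false: topological entropy is constant on large parameter sets across which the kneading invariant varies. Every hyperbolic component is such a set, and more dramatically so is every renormalization window---in the quadratic family the entire period-$3$ window carries a full cascade of distinct kneading invariants while $\htop$ stays fixed at $\log\frac{1+\sqrt5}{2}$. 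So exhibiting $f'$ and $f'''$ with equal entropy and distinct kneading data gives you nothing; you must identify a mechanism that forces the entropy to actually move strictly above or below $h$ somewhere in between.

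The paper supplies exactly such a mechanism, and this is the real content of the proof. It fixes $v_2^*$ near $-1$ so that $c_2$ is superattracting of period $n$ at some $v_1=v_1^*$, establishes by explicit derivative estimates that $\frac{d}{dv_1}g^n_{v_1,v_2^*}(c_2)>\frac{d}{dv_1}c_2(v_1)$, so that increasing $v_1$ past $v_1^*$ destroys the attracting orbit in a saddle-node bifurcation and strictly decreases the entropy (branches of $g^{n+1}$ near $c_2$ are lost), and then observes that as $v_1\to 1$ the map tends to the cubic Chebyshev polynomial with entropy near $\log 3$, so the entropy must increase again; hence $v_1\mapsto\htop$ is not monotone. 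To repair your argument you would need to replace the kneading-determinant step with a comparable bifurcation or lap-counting argument showing that the entropy at some explicit intermediate parameter genuinely differs from $h$. A secondary point: your appeal to the connectedness of $L_h$ from the Main Theorem does no work here---two maps with the same entropy lie in the same isentrope by definition, since $L_h$ is a level set and not a connected component.
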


\begin{theorem}[Non-local connectivity]\label{thm:isentropeNLC}
For any $b \ge 4$, there is a dense set $H\subset  [0, \log(b-1)]$ such that for each
$h\in H$,  the isentrope $L_h$ of $\P^b_\shape$ is not locally connected.
\end{theorem}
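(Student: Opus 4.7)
The strategy combines Theorem~\ref{thm:non-mono} (non-monotonicity in a single critical value), the monotonicity-conjecture theorem of \cite{BS} (connectedness of each $L_h$), and a tuning/renormalization construction that relies on having $b \ge 4$ critical points. The plan is to use Theorem~\ref{thm:non-mono} to produce folds of $\htop$ at arbitrarily small parameter scales, and then to use the connectedness result to convert each fold into a ``spur'' of $L_h$; these spurs will accumulate at a limiting parameter in a way that obstructs local connectivity of $L_h$ for a dense set of entropy values $h$.

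First I would exploit Theorem~\ref{thm:non-mono} as follows. By hypothesis there exist $c_2,\dots,c_b$ and an interval $J$ of $v_1$-values on which the function $v_1 \mapsto \htop(f_{(v_1,c_2,\dots,c_b)})$ attains a strict local extremum at some $v_1^*\in J$, with value $h^*$. For $h$ slightly on the appropriate side of $h^*$, this restricted entropy function attains the value $h$ at (at least) two distinct points $v_1^-(h) < v_1^+(h)$, both converging to $v_1^*$ as $h\to h^*$. By the connectedness theorem of \cite{BS}, the two corresponding polynomials lie in the same component $L_h$ and are therefore joined by an arc in $L_h$ that must leave the one-dimensional slice $\{v_2=c_2,\dots,v_b=c_b\}$. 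Letting $h\to h^*$ yields arbitrarily small such arcs (``spurs'') shrinking onto $(v_1^*,c_2,\dots,c_b)$.

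Next I would reproduce this picture at arbitrarily small parameter scale. For $b\ge 4$, tuning along two of the critical points by a prescribed renormalization combinatorics, while using the remaining $\ge 2$ critical points as auxiliary parameters, embeds in $\P^b_\shape$ nested renormalization windows $W_1\supset W_2\supset\cdots$ shrinking to an infinitely renormalizable $f_\infty$. Inside each $W_n$ the conclusion of Theorem~\ref{thm:non-mono} applies to a renormalized parameter slice, producing a local extremum of entropy at some value $h^*_n$; by contraction of the tuning operator, $h^*_n \to h^*_\infty$. At the isentrope $L_{h^*_\infty}$, each $W_n$ (for $n$ sufficiently large) then contributes a spur, so that infinitely many pairwise disjoint spurs accumulate at $f_\infty$. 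Any small neighborhood $V$ of $f_\infty$ therefore meets infinitely many components of $L_{h^*_\infty}\cap V$, so $L_{h^*_\infty}$ fails to be locally connected at $f_\infty$. Varying the renormalization combinatorics over a combinatorially dense collection then yields a dense set $H \subset [0,\log(b-1)]$ of such entropy values.

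The principal obstacle is the penultimate claim above --- namely, that the spurs at different renormalization depths remain topologically separated inside $L_{h^*_\infty}$, so that they indeed produce distinct components of $L_{h^*_\infty}\cap V$ rather than collapsing into a single locally connected piece via paths that run through the ambient parameter directions. Establishing this requires a careful description of how each spur meets $\partial W_n$ and a verification that spurs of different depths cannot be joined at small scale except through themselves. The hypothesis $b\ge 4$ is essential here: two critical points are needed to carry the renormalization combinatorics that generates the nested windows, and at least two more are needed to realize, inside each window, the one-parameter non-monotonicity supplied by Theorem~\ref{thm:non-mono}.
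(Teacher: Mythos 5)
Your approach is genuinely different from the paper's, but it breaks down at exactly the point you flag as ``the principal obstacle'', and that point is not a technicality --- it is the entire content of the theorem. An accumulation of arbitrarily small ``spurs'' of $L_h$ at a point $f_\infty$ does not by itself contradict local connectivity (the Hawaiian earring is locally connected at its wedge point); to disconnect $L_h\cap V$ locally you must exhibit pieces of $L_h$ near $f_\infty$ that cannot be joined to $f_\infty$ by \emph{any} small connected subset of $L_h$. Nothing in your construction provides such an obstruction. Moreover the earlier steps are also unsupported: the connectedness theorem of \cite{BS} only guarantees that $v_1^-(h)$ and $v_1^+(h)$ lie in the single connected set $L_h$, with no control on the diameter or location of a connecting subcontinuum (nor that it is an arc), so the claim that letting $h\to h^*$ yields arbitrarily small arcs shrinking onto $(v_1^*,c_2,\dots,c_b)$ does not follow; and even granting small spurs at every renormalization depth, they could be attached to one another at small scale through the $b-1$ transverse parameter directions. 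Further gaps: Theorem~\ref{thm:non-mono} concerns the polynomial family itself and does not automatically transfer to a renormalized slice (which is not a polynomial family); realizing $h^*_\infty$ by a fold inside \emph{every} deep window requires controlling the sign of $h^*_n-h^*_\infty$ against the direction each fold opens; and the density of $H$ is asserted rather than derived.

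The paper's proof supplies precisely the missing obstruction via a quantitative entropy barrier rather than a topological accumulation argument. It constructs (Theorem~\ref{thm:nonloc-sn} and Section~\ref{subsec:construction}) a map $g\in L_h$ with a saddle-node periodic orbit of period $N$ whose basin contains two adjacent critical points inside one fundamental domain, all other critical points lying in hyperbolic basins, and $g$ semi-conjugate to a constant-slope $(b-2)$-modal map $T$ with periodic turning points; the set of realizable $h=\log s$ is dense in $[0,\log(b-1)]$. Unfolding the saddle-node produces hyperbolic windows $W_n$ (a periodic attractor of period $nN+k$ capturing both critical points) accumulating at $g$ with $\htop\equiv h$ on $W_n$, while Lemma~\ref{lem:strictlymonotone} forces $\htop(g_{s_m})>h_m>h$, with $h_m$ \emph{independent of the path}, at parameters $s_m$ between consecutive windows along any path from $g$ into $W_n$ staying $\tau_0$-close to $g$ (Lemma~\ref{lem:tongues}). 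This comb structure is what rules out small connected neighborhoods of $g$ in $L_h$. To rescue your scheme you would need an analogous uniform lower bound on the entropy deviation along every short connection between consecutive spurs --- which is essentially what the saddle-node/window mechanism is engineered to deliver, and which your renormalization picture does not produce.
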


The proof of these theorems will be given in  Sections~\ref{sec:nonmono} and \ref{sec:locally_disconnected}. 

The method used in the proof of the last  theorem is related to a result by Friedman \& Tresser \cite{FrTr} which states
that the zero isentrope of bimodal circle maps is not locally connected. 
This is one aspect of a description of the boundary of chaos behavior for bimodal circle maps;
for a discussion of a more extensive program, see \eg \cite{GLT,MTr}.

{\bf Questions:} For which values of entropy are the isentropes not locally connected? 
The values $h = \log s$ for which our proof works are for algebraic $s$. Are there non-algebraic values of $s$ such that the isentrope of level $\log s$ not  locally connected? Is it possible to construct non-local connectivity when 
$h\in (\log(b-1),\log(b+1))$.  Are there values $h$ so that there 
exists a dense subset of $g \in L_h$ so that $L_h$ is non-locally 
connected at $g$? 

For other questions, see \cite{vS2}.

\subsection{Some questions about other multimodal interval maps}

Before giving the proof of the main theorems of this paper, let us turn to other models of $b$-modal families. We will consider two such families,
namely the tent maps and the stunted sawtooth maps.
We will always scale them such that $f:[-1,1] \to [-1,1]$, $f(-1) = \shape \in \{-1,1\}$, $f(1) \in \{-1,1\}$,
and there are $b$ critical point $c_i = -1+2i/(b+1)$, $i=0,\dots,b+1$ so by default $c_0 = -1$ and $c_{b+1}=1$
are the boundary points.
The critical values $v_i = f(c_i) \in [-1,1]$ with $v_0 = \shape$, 
$v_{b+1} = (-1)^{b+1} \shape$ will satisfy
\begin{equation}\label{eq:v}
(v_i-v_{i-1}) \cdot \shape \ \left\{ \begin{array}{ll}  
< 0 & \text{ if } i \text{ is odd;}\\
> 0 & \text{ if } i \text{ is even.}
\end{array} \right.
\end{equation}
\begin{itemize}[topsep=-0.15cm,itemsep=0.3ex,leftmargin=0.6cm]
\item A $b$-modal tent-map $f$ is obtained by choosing $b$ critical values $v_i$ satisfying
\eqref{eq:v}, and then defining
$$
f(x) =  \left\{ \begin{array}{ll}  
v_i & \text{ if } x = c_i;\\
\text{by linear interpolation } & \text{ if } x \notin \{ c_0, \dots, c_{b+1} \}. 
\end{array} \right.
$$
The class of $b$-modal tent-maps is denoted by $\T^b_\shape$.
If $f \in \T^b_\shape$ has constant slope $f' = \pm s$, then $\htop(f) = \max\{ \log s, 0\}$.
This follows immediately from the variation interpretation of entropy in \eqref{eq:htop}.
By the same token, $|f'| \ge s$ implies $\htop(f) \ge \log s$.
A well-known result by Milnor \& Thurston \cite{MT} (preceded by Parry \cite{Parry} 
in the context of $\beta$-transformations) is that every $b$-modal interval map
$g$ of entropy $\htop(g) = \log s > 0$ is semi-conjugate to some
$f \in \T^{b'}_\shape$ with constant slope $\pm s$ for some $b' \le b$.
However, the dynamics of $f$ may be strictly less complicated, because
maps in $\T^b_\shape$ allow only periodic intervals of limited types. Hence the semi-conjugacy
connecting $f \in \T^b_\shape$ and $g \in \P^b_\shape$ can collapse (pre-)periodic intervals of $g$ to (pre-)periodic
points of $f$.
\item A $b$-modal stunted sawtooth map is obtained from a fixed $b$-modal
sawtooth maps as follows.
First constructed an unstunted sawtooth map (or full $b$-modal tent-map) 
$S_0:[-1,1] \to [-1,1]$ by setting $S_0(c_i) = \shape$ or $-\shape$ if $i$ is odd or even 
respectively, and defining $S_0$ in between by linear interpolation.
Then choose new critical values $v_i$ satisfying \eqref{eq:v},
and find (for $i = 1, \dots, b$) the maximal closed intervals $Z_i \owns c_i$ such that
$(S_0(x)-v_i) \cdot (-1)^i \le 0$ for all $x \in Z_i$.
Finally, define the  {\em stunted sawtooth map} as
$$
f(x) = \left\{ \begin{array}{ll}
v_i & \text{ if } x \in Z_i \text{ for some } i \in \{ 1, \dots, b\}; \\
S_0(x) & \text{ otherwise. }
\end{array} \right.
$$
The class of $b$-modal stunted sawtooth maps is denoted by $\SS^b_\shape$.
This class of maps was heavily used in the proof of Milnor's conjecture, see \cite{BS}. 
\end{itemize}
It is again natural to parametrize maps in $\T^b_\shape$ and $\SS^b_\shape$ by their critical values $v = (v_1, \dots, v_b)$.

\begin{figure}
\begin{center}
\unitlength=1.3mm
\begin{picture}(12,25)(17,3)
\put(-35,0){\resizebox{4cm}{4cm}{\includegraphics{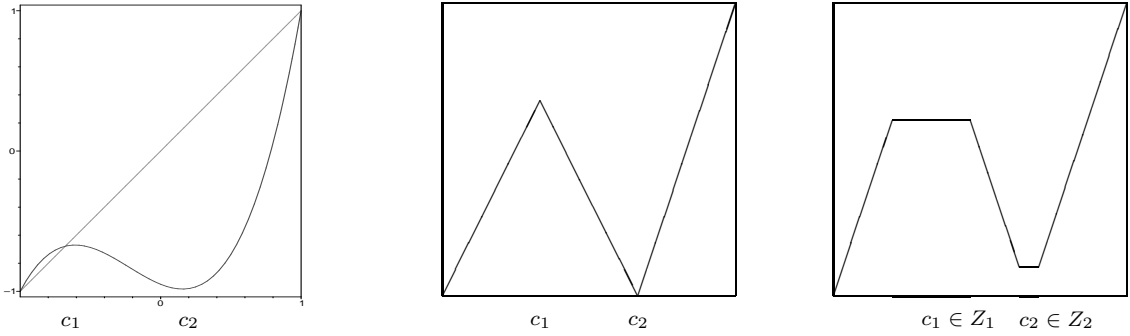}}}
\put(-29,-2){\scriptsize $c_1$} 
\put(-17,-2){\scriptsize $c_2$} 
\thinlines
\put(10,1){\line(1,0){30}}
\put(10,1){\line(0,1){30}}
\put(40,31){\line(-1,0){30}}
\put(40,31){\line(0,-1){30}}
\put(10,1){\line(1,2){10}}
\put(20,21){\line(1,-2){10}}
\put(30,1){\line(1,3){10}}
\put(19,-2){\scriptsize $c_1$} 
\put(29,-2){\scriptsize $c_2$} 
\put(50,1){\line(1,0){30}}
\put(50,1){\line(0,1){30}}
\put(80,31){\line(-1,0){30}}
\put(80,31){\line(0,-1){30}}
\put(50,1){\line(1,3){6}}\put(56,19){\line(1,0){8}}
\put(64,19){\line(1,-3){5}}\put(69,4){\line(1,0){2}}
\put(71,4){\line(1,3){9}}
\put(59,-2){\scriptsize $c_1 \in Z_1$}\put(56,0.95){\line(1,0){8}} 
\put(69,-2){\scriptsize $c_2 \in Z_2$}\put(69,0.95){\line(1,0){2}} 
\end{picture}
\end{center}
\caption{\label{fig:bimodals} Bimodal maps with $\shape = -1$: polynomial, tent and 
stunted sawtooth.}
\end{figure}

\begin{proposition}\label{prop:isentrope_sawtooth}
Let $f_v \in \SS^b_\shape$ denote the stunted sawtooth map with critical values  $v = (v_1, \dots, v_b)$.
For each $i \in \{1, \dots, b\}$, the map
$$
v_i \mapsto \htop(f_v)
$$
is increasing/decreasing if $i$ is odd/even.
All isentropes of $\SS^b_\shape$ are connected and locally connected.
\end{proposition}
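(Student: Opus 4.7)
My strategy has two stages. I first show that $v\mapsto \htop(f_v)$ is continuous on the parameter polytope
$D=\{(v_1,\dots,v_b)\,:\,\eqref{eq:v}\text{ holds}\}$
and is monotone in each coordinate $v_i$ (non-decreasing for odd $i$, non-increasing for even $i$). From these two properties I then deduce that every isentrope $L_h=\{v\in D:\htop(f_v)=h\}$ is connected and locally connected.

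For the coordinate-wise monotonicity I fix all $v_j$ with $j\neq i$ and vary only $v_i$. The crucial observation is that as $v_i$ changes, the stunted map $f_v$ is altered only on the plateau $Z_i$ around $c_i$. For odd $i$, the point $c_i$ is an extremum of $S_0$ of the type at which increasing $v_i$ shrinks $Z_i$ and produces a pointwise larger map on the union of the old and new plateaus. Consequently the itinerary of $c_i$ rises in the parity-twisted kneading order on symbol sequences over the laps of $f_v$, and the classical Milnor--Thurston theory in its multimodal form yields $\htop(f_{v'})\ge \htop(f_v)$. The even case is symmetric, with the role of maxima and minima interchanged. Continuity of $\htop$ on $\SS^b_\shape$ is a standard fact for piecewise monotone interval maps with a uniformly bounded lap number (here at most $b+1$), and $v\mapsto f_v$ is manifestly continuous in the $C^0$ topology.

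For connectedness of $L_h$ I join two given points $v,\tilde v\in L_h$ by a path constructed one coordinate at a time. To move $v_i$ from $v_i$ to $\tilde v_i$ while staying on $L_h$, I simultaneously adjust a coordinate $v_j$ of opposite parity so that the entropy change produced by the motion of $v_j$ compensates that produced by $v_i$. Continuity of $\htop$ together with the intermediate value theorem produces a continuous compensating function $v_j(t)$, and the linearity of the constraints \eqref{eq:v} guarantees that $v_j(t)$ remains inside $D$. Iterating over $i=1,\dots,b$ yields a path from $v$ to $\tilde v$ inside $L_h$, and performing the same construction inside an arbitrary prescribed neighborhood of a point of $L_h$ (using continuity of $\htop$ to keep the compensating excursions small) gives local connectedness. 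The main obstacle I anticipate is handling plateau regions on which $\htop$ is locally constant in $v_i$ (which happens when $c_i$ is absorbed into a periodic interval of $f_v$ whose entropy is insensitive to that perturbation); on such plateaus the compensating coordinate is non-unique, but the isentrope is correspondingly thickened there, which actually aids connectedness: the path simply traverses the plateau at constant entropy. Degenerate boundary cases, including $b=1$ (where $L_h$ is manifestly a closed subinterval of $[-1,1]$) and regions of $\partial D$ at which only one parity of coordinate is free to move, are to be handled by direct inspection using continuity of $\htop$ alone.
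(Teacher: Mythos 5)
Your monotonicity argument reaches the right conclusion, but by a heavier route than the paper's. The paper's proof is purely combinatorial: changing $v_i$ alters $f_v$ only on the plateau $Z_i$, and shrinking the plateau leaves every orbit that avoids $Z_i$ untouched while possibly creating new ones, so by the periodic-orbit (or lap-number) characterisation \eqref{eq:htop} entropy can only increase. Your kneading-theoretic version can presumably be made to work, but as stated it glosses over the fact that shrinking $Z_i$ changes the itineraries of \emph{all} points whose orbits meet the symmetric difference of the old and new plateaus --- in particular the kneading sequences of the other critical values --- so ``the itinerary of $c_i$ rises'' is not by itself the monotonicity input you need from Milnor--Thurston.

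The genuine gap is in the connectedness and local-connectedness part. Your compensation scheme requires that, at each time $t$, the entropy excess created by moving $v_i$ can be cancelled by moving a single coordinate $v_j$ of opposite parity; but there is no guarantee that the range of $v_j\mapsto\htop(f_v)$ (with the other coordinates frozen at their current values) contains $h$: entropy may be insensitive to $v_j$ over its entire admissible range, or $v_j$ may already sit at the boundary of the parameter polytope. The intermediate value theorem therefore does not even give \emph{existence} of the compensating value, let alone a continuous selection $v_j(t)$ (on plateaus the solution set is an interval and a continuous choice must still be made). For local connectedness the problem is worse: continuity of $\htop$ does not bound the \emph{size} of the compensating excursion --- a tiny entropy excess may require moving $v_j$ a long way, or be uncancellable by $v_j$ altogether --- so your path need not stay inside a prescribed small neighbourhood. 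The paper avoids all of this by not constructing a path inside $L_h$ at all: it takes the coordinate parallelepiped $P$ spanned by the two given points, foliates it by strands along which all coordinates $\zeta_i=\shape\cdot(-1)^i v_i$ increase simultaneously, notes that each strand runs monotonically from a corner with entropy $\le h$ to a corner with entropy $\ge h$ and hence meets $L_h$ in a compact arc or point, and identifies the union of these arcs with $P\cap L_h$, which is connected and has diameter at most $d(x,y)$. To repair your argument you should replace ``one coordinate plus one compensator'' by this global monotone foliation of the box spanned by the two points.
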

\begin{proof} See Section~\ref{sec:nonmono}. 
\end{proof}

Surprisingly, the corresponding questions seem to be unanswered for the space $\T^n_\shape$:

{\bf Question:} Are the isentropes of $\T^b_\shape$ connected? Are they all locally connected?

\section{Non-monotonicity w.r.t.\ single critical values}
\label{sec:nonmono}

Let $\zeta_i = \shape \cdot (-1)^i \cdot v_i$, so increasing $\zeta_i$ always means decreasing the width of the plateau $Z_i$.
Using these coordinates, we can reformulate the first statement of
Proposition~\ref{prop:isentrope_sawtooth} as
\begin{quote}
The map $\zeta = (\zeta_1, \dots, \zeta_d) \to \htop(T_\zeta)$ 
is non-decreasing in each coordinate.
\end{quote}

\begin{proof}[Proof of Proposition~\ref{prop:isentrope_sawtooth}]
Increasing a parameter $\zeta_i$ makes a plateau narrower, and affects none of
the orbits that never enter $Z_i$. Therefore only new orbits are created, and none destroyed. Hence entropy is non-decreasing in each $\zeta_i$.

Now to prove local connectedness of isentrope $L_s$, let $x, y \in L_s$ be two arbitrary points; they subtend a parallelepiped $P$ of dimension $\le b$
with faces perpendicular to the coordinate axes.
That is, $x$ and $y$  are vertices of $P$, and we denote the vertices
at which all parameters $\zeta_i$ are minimal and maximal by $l$ and $u$
respectively.
Foliate $P$ by one-dimensional strands $\gamma$, connecting $l$ to $u$,
such that all parameters vary monotonically along $\gamma$.
Then for each such $\gamma$, there is a unique compact arc (or singleton)
$J_\gamma$ at which the entropy is constant $s$.
The union $\cup_\gamma J_\gamma = P \cap L_s$ and due to continuity of entropy,
it is connected.
Hence between any two $x, y \in L_s$ there is an arc connecting 
$x$ and $y$ within $L_s$ with diameter $\le d(x,y)$ in sum-metric $d$.
This proves local connectivity.
\end{proof}

The analogous monotonicity statement is false for $\P^b$ for $b \ge 2$.
The below proof demonstrates that 
parametrizing the cubic family by its critical values $v_1, v_2$
results in non-monotonicity in (each of) its parameters separately.

\begin{proof}[Proof of Theorem~\ref{thm:non-mono}]
The \lq anchored\rq\ cubic map in $\P^3_{-1}$ form a two-parameter family
$$
f_{\alpha,\beta}(x) = \alpha x^3 + \beta x^2 + (1-\alpha)x - \beta,
$$
where $0 \le \alpha \le 4$ and $|\beta| \le 2\sqrt \alpha - \alpha$.
Thus $f_{\alpha,\beta}(\pm 1) = \pm 1$ and the critical points are
$c_{1,2} = \frac{1}{3\alpha} (-\beta \pm \sqrt{\beta^2-3\alpha(1-\alpha)})$.
It is possible to parametrize the family $\P^b$ by critical values
$v_1,v_2$, see \cite[Theorem II.4.1]{MS}, but the formulas are unpleasant.
For our purposes it suffices to verify that parameter $\alpha$ depends
monotonically on $v_1$ when $v_2$ is kept constant (close to $-1$).

For $\beta = 2\sqrt \alpha - \alpha$, the rightmost critical point maps to $-1$,
and we will use this parameter choice for the moment.
That is, $c_1 = -\frac13(1+\frac{1}{\sqrt{\alpha}})$,  
$c_2 = 1-\frac{1}{\sqrt{\alpha}}$, and these critical points are increasing
in $\alpha$.
The first critical value 
$$
v_1 = f_{\alpha,2\sqrt{\alpha}-\alpha}(c_1) = \frac{32}{27}\alpha - \frac{48}{27} \sqrt{\alpha} - \frac19 - \frac{4}{27 \sqrt{\alpha}}
$$
is increasing in $\alpha$ as well.

In fact, numerics show that $\frac{d}{d\alpha} f_{\alpha,\beta}(x) > 0$ for all 
$1 \le \alpha \le 4$ and $x \in (-1, c_1]$.
It follows that for every two values of $v_1$, say $v_1(\alpha) < v_1(\alpha')$
for $1 \le \alpha < \alpha' \le 4$ and $x \in (0, c_1(\alpha))$, we have
$f_{\alpha,2\sqrt{\alpha}-\alpha}(x) < f_{\alpha',2\sqrt{\alpha'}-\alpha'}(x)$.

It can also be verified that the parameter
$\alpha$ such that $f_{\alpha,2\sqrt{\alpha}-1}(c_1) > c_2 > f^2_{\alpha,2\sqrt{\alpha}-1}(c_1) = c_1$
lies within the interval $[3.668, \ 3.670]$.
For these values of $\alpha$, 
$v_1 = f_{\alpha,2\sqrt{\alpha}-\alpha}(c_1) \approx 0.75$
and the left-most preimage 
$u \in f^{-1}_{\alpha, 2\sqrt{\alpha}-\alpha}(c_2) \approx -0.72$.
Also $c_1$ is still attracted to a period $2$ orbit, and the entropy
$\htop(f_{\alpha,2\sqrt{\alpha}-\alpha}) = \log 1+\sqrt{2}$.
We find numerically that for $x \approx -0.72$,
\begin{equation}\label{eq:dfudc2}
\frac{d}{d\alpha} f_{\alpha, 2\sqrt{\alpha}-\alpha}(x) > 2 > \frac18 > \frac{d}{d\alpha}(c_2(\alpha)) = \frac1{2\alpha\sqrt{\alpha}}.
\end{equation}

Now we fix some large $n$, $v_1^*$ close to $0.75$
and $v_2^*$ close to $-1$ such that
$v_2^* = f_{\alpha,\beta}(c_2) < f^2_{\alpha,\beta}(c_2) ,< \dots < f^{n-1}_{\alpha,\beta}(c_2) < c_1 < f^n_{\alpha,\beta}(c_2) = c_2$.
That is, from now on $\alpha,\beta$ and $c_{1,2}$ depend on $v_1$ and $v_2$, where 
$v_2 = v_2^*$ is fixed and $v_1$ is the remaining variable such that
$f^n_{\alpha,\beta}(c_2) = c_2$ for $v_1 = v_1^*$ and $v_2 = v_2^*$.
We will write $f_{\alpha, \beta} = g_{v_1, v_2}$.
Since $v_2^*$ can be taken arbitrarily close to $-1$ (at the price of 
taking $n$ large),
we can assume that $\alpha$ is still an increasing function of $v_1$,
and the estimates leading up to \eqref{eq:dfudc2}
remain valid.

This means that $\frac{d}{dv_1}(g_{v_1, v_2^*}(x)) > 0$ for every fixed
$x \in (0,c_1)$, and for values $x \approx g_{v_1, v_2^*}^{-1}(c_2)$,
we have $\frac{d}{dv_1}(g_{v_1, v_2^*}(x)) > \frac{d}{dv_1}(c_2)$.
Therefore, $\frac{d}{dv_1} g^k_{v_1,v_2^*}(c_2) > 0$ for $1 < k < n$
and  $\frac{d}{dv_1} g^n_{v_1,v_2^*}(c_2) >\frac{d}{dv_1}(c_2)$.

Again, for $v_1 = v_1^*$, the second critical point $c_2$ is (super)attracting 
of period $n$.
Let us denote by $q(v_1)$ the continuation of this (attracting) $n$-periodic.
If $U$ is a small neighborhood of $c_2$,
$f^{n+1}_{v_1, v_2^*}|_U$ has two branches for $v_1 \ge v_2^*$ and
four branches for $v_1 < v_1^*$.
If $v_1 > v_1^*$ is so large that $q(v_1)$ disappears in a saddle-node 
bifurcation, then $\htop(g_{v_1, v_2^*})$ decreases compared to $\htop(g_{v_1^*,v_2^*})$.
However, for $v_1 \approx 1$, \ie $g_{v_1, v_2^*}$ close to the 
cubic Chebyshev polynomial, the entropy is close to $\log 3$, and hence
$v_1 \mapsto \htop(g_{v_1, v_2^*})$ cannot be monotone.
\end{proof}

\section{A lemma on increasing entropy}
One aspect of increase of entropy is that it always involves the creation
of periodic orbits, see equation \eqref{eq:htop}.
However, not every creation of a periodic orbit increases the entropy;
namely if orbits are created within an existing non-trivial periodic interval, 
then this has no effect on the entropy.
The next results shows that for piecewise monotone maps, if no periodic intervals are present, entropy is strictly monotone in single changes of critical values.

\begin{lemma}\label{lem:strictlymonotone}
For each piecewise monotone map $T\colon [-1,1,\to [-1,1]$ and for each $\kappa>0$ there exists $\xi = \xi(\kappa, T)>0$ with the following properties.
Assume that $f,\tilde f\colon [-1,1] \to [-1,1]$ are piecewise monotone maps (possibly with plateaus), 
$U$ is an interval and let $J_1,\dots,J_k\subset U$ be the maximal 
intervals on which $f$ is strictly monotone or constant. 
Moreover, assume that 
\begin{itemize}[topsep=-0.1cm,itemsep=0ex,leftmargin=0.8cm]
\item $f$ is semi-conjugate to $T$ and the semi-conjugacy only collapses plateaus (and their preimages) and basins of periodic attractors of $f$;
\item 
$\tilde f(J_i) \supset f(J_i)$ for each $i=1,2,\dots,k$ and so that 
$|\tilde f(J_i)|\ge (1+\kappa)|f(J_i)|$ for at least one $i$,
\item $\tilde f(J) \setminus f(J)$ is not eventually mapped into a periodic cycle
of intervals (other than $[-1,1]$ itself), is not attracted to periodic attractor, nor contained in a wandering interval.
\end{itemize}
Then $\htop(\tilde f) > (1+\xi)\htop(f)$.
\end{lemma}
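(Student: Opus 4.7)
The plan is to use the lap-number characterization of entropy combined with a horseshoe-style propagation of the ``extra material'' supplied by $\tilde f$, using the fixed model $T$ to make all constants uniform. Because the semi-conjugacy $\pi\colon[-1,1]\to[-1,1]$ from $f$ to $T$ collapses only sets carrying no topological entropy (plateaus and their preimages, and basins of attracting periodic orbits), we have $\htop(f)=\htop(T)=:h$. Writing $s=e^h$, and noting that $h\le \log k$ is a priori bounded in terms of $T$, it suffices to establish an additive gap $\htop(\tilde f)\ge h+\eta$ with $\eta=\eta(\kappa,T)>0$: the required $\xi=\eta/h$ follows, while the degenerate case $h=0$ is handled by exhibiting any positive-entropy horseshoe for $\tilde f$.

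Next I would isolate and propagate the new material. Choose $i_0$ with $|\tilde f(J_{i_0})|\ge(1+\kappa)|f(J_{i_0})|$ and set $A=\tilde f(J_{i_0})\setminus f(J_{i_0})$, a union of at most two intervals of total length at least $\kappa\,|f(J_{i_0})|$. By the third bullet, $A$ is neither trapped in a proper periodic interval cycle, nor in a periodic attractor's basin, nor in a wandering interval, so $\pi(A)$ is a non-degenerate interval inside the dynamically non-trivial part of $T$. Since $T$ is (topologically) transitive on that part, there is $N=N(T)\in\N$ with $T^N(\pi(A))$ containing a turning point of $T$; lifting, $\tilde f^N(A)$ contains an interval $W$ which strictly contains $\tilde f(J_{i_1})$ for some index $i_1$, i.e.\ it crosses a critical point of $\tilde f$.

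The last step converts this into a quantitative rate gap. Pulling back, $\tilde f^{N+1}|_{J_{i_0}}$ acquires at least one extra maximal monotonicity interval compared with $f^{N+1}|_{J_{i_0}}$; iterating this observation yields some $M=M(T,\kappa)$ and two disjoint subintervals $K_1,K_2\subset J_{i_0}$ with $\tilde f^M(K_j)\supset J_{i_0}$ for $j=1,2$. Splicing this ``bonus horseshoe'' into the Markov/kneading structure that $f$ inherits from $T$ produces a lap-number estimate of the form $\ell(\tilde f^{nM})\ge c\,(s^M+1)^n$, hence $\eta=\frac{1}{M}\log(1+s^{-M})>0$ and therefore $\htop(\tilde f)\ge h+\eta$.

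The main obstacle is uniformity: the constants $N$, $M$, and ultimately $\eta$ must depend only on $T$ and $\kappa$, never on the particular $f$ or $\tilde f$. This uniformity rests on two fixed ingredients provided by $T$, namely a definite transitivity rate on its non-trivial part, and the observation that $\kappa$ measures precisely how non-degenerate the image $\pi(A)$ is in that non-trivial part. A secondary technicality is that $A$ may consist of one or two components straddling $f(J_{i_0})$; one simply runs the argument on whichever component has length at least $\tfrac12\kappa\,|f(J_{i_0})|$, and the further care needed when $f$ has attracting periodic orbits or renormalizations is exactly what the third bullet is designed to rule out.
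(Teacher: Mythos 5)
The paper offers no argument for this lemma beyond the citation \cite[Lemma 7.2]{BS}, so your proposal has to stand on its own. Its overall shape --- the new image material must spread until it produces an extra full branch, which is then spliced into the existing branch structure to raise the growth rate of lap numbers --- is the natural strategy and is in the spirit of the cited result. However, the step you yourself flag as ``the main obstacle'', uniformity of the constants, is not resolved by what you write, and it is the entire content of the lemma. The hypothesis $|\tilde f(J_{i_0})|\ge(1+\kappa)|f(J_{i_0})|$ is purely \emph{relative}: there is no lower bound on $|f(J_{i_0})|$ in terms of $\kappa$ and $T$, so $|A|$, and a fortiori $|\pi(A)|$, can be arbitrarily small (the semi-conjugacy $\pi$ comes with no modulus of continuity, and it could even collapse $A$ to a point if $A$ lies in a preimage of a plateau of $f$, a case the third bullet does not exclude). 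Consequently there is no $N=N(T)$ after which $T^N(\pi(A))$ contains a turning point; the covering time genuinely depends on $f$ and $\tilde f$. Moreover, a general piecewise monotone $T$ need not be transitive on its ``non-trivial part'' (compare the paper's remark about renormalizable $T$), so even the qualitative covering statement needs a spectral-decomposition argument; and pushing the \emph{$\tilde f$}-orbit of $A$ down to $T$ via $\pi$ is illegitimate in any case, since $\pi$ intertwines $f$ with $T$, not $\tilde f$ with $T$.

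A second gap is the concluding estimate $\ell(\tilde f^{nM})\ge c\,(s^M+1)^n$. This presumes that the old dynamics already supplies about $s^M$ full branches of $f^M$ over the fixed interval $J_{i_0}$, i.e.\ a Markov covering structure over $J_{i_0}$ realizing the full entropy within a fixed time $M$. For a general piecewise monotone $T$ the entropy may only be realized asymptotically (via horseshoes of unbounded return time, as in Misiurewicz's theorem) and possibly on a part of the interval far from $\pi(J_{i_0})$; one then needs a connecting-orbit argument, and the resulting gap $\eta$ again threatens to depend on $f$ rather than only on $\kappa$ and $T$. In short, converting a relative length gain of $\kappa$ into an absolute, $f$-independent entropy gain $\xi(\kappa,T)$ is exactly what the lemma asserts, and your write-up postulates this conversion rather than proving it.
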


\begin{proof} This is proved in \cite[Lemma 7.2]{BS}. \end{proof}

\section{Isentropes which are non-locally connected}\label{sec:locally_disconnected}

In this section we shall prove the existence of isentropes which 
are non-locally connected in the space $\P^b_\shape$ for any $b\ge 3$
and $\shape=\pm 1$: we will show that there exists a dense set of $h\in [0,\log(b-1)]$ for which $L_{h}\subset \P^b_\shape$ is non-locally connected.  

\begin{definition}\label{def:fundamental_domain}
Given an interval map $g$, we say that an interval $F$ is a {\em fundamental domain} in the basin of an attracting orbit of period $N$ if
\begin{enumerate}
 \item[(i)] $F$ and $g^N(F)$ have precisely one point in common, and 
\item[(ii)] $\partial g^N(F)=g^N(\partial F)$.
\end{enumerate} 
This notion agrees with the usual notion in the monotone case, 
but the assumption (ii) is added to deal with the case that
$g^N_t|J_t$ is not monotone.
\end{definition}

\begin{theorem}\label{thm:nonloc-sn}
Assume that $s = e^h \in [0, b-1]$ and let 
$T \in \T^{b-2}_\shape$ be a map with slope $\pm s$ with $b-2$ periodic turning points. 
Let $\hat q$ be an $N$-periodic point of $T$ so that $\cup_{n\ge 0}T^{-n}(\hat q)$ is dense in $[-1,1]$
and so that the orbit of $\hat q$ does not contain a turning point of $T$.
Then there exists $\delta>0$ such that if
 $L_h\subset \P^b_\shape$ contains a map $g$ with the properties:
\begin{enumerate}[topsep=-0.15cm,itemsep=0.3ex,leftmargin=0.6cm]
\item $g$ has a periodic point $q^*$ of saddle-node type of period $N$;
\item two adjacent critical points $c_\eta,c_{\eta+1}$ are both contained in a fundamental domain 
$F$ in the immediate basin of $q^*$ and  $|c_\eta-c_{\eta+1}|<\delta |F|$;
\item $g$ is semi-conjugate to $T$  where the semi-conjugacy only
collapses basins of periodic attractors.
\item each other critical point of $g$  is in the basin of a hyperbolic periodic attractor;
\item arbitrarily close to $g$, there exist maps $\tilde g$ in $\P^b_\shape$ which do not have a parabolic periodic point of period $\le N$,
\end{enumerate}
then $L_h$ is not locally connected at $g$.
\end{theorem}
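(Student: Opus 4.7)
My analysis takes place in a two-dimensional slice $\Lambda \subset \P^b_\shape$ obtained by freezing all critical values $v_j$, $j \notin \{\eta,\eta+1\}$, at their $g$-values and letting $(v_\eta,v_{\eta+1})$ vary in a small neighborhood of the point corresponding to $g$. In $\Lambda$ I introduce the open \emph{attractor region}
$$
\A = \bigl\{ g' \in \Lambda : g' \text{ has an attracting } N\text{-periodic orbit with both } c_\eta(g'),\ c_{\eta+1}(g') \text{ in its immediate basin}\bigr\}.
$$
For every $g' \in \A$ the semi-conjugacy of $g$ to $T$ persists, with only the (new) attracting period-$N$ basin and the basins of the unaltered hyperbolic attractors from (4) being collapsed, so $\htop(g') = \htop(T) = h$; hence $\A \subset L_h$. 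Since $q^*$ is of saddle-node type, $g \in \partial \A$ on a smooth saddle-node arc, and the closeness $|c_\eta - c_{\eta+1}| < \delta |F|$ from (2) is what forces $\A$ to have a \emph{cusp} at $g$ whose opening tends to $0$ as $\delta \to 0$.

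Next I build a sequence of points in $L_h$ converging to $g$ but lying outside $\overline{\A}$. By condition (5) there exist $\tilde g_n \to g$ in $\Lambda$ with no parabolic periodic point of period $\le N$; such $\tilde g_n$ lie outside $\overline{\A}$, and $\htop(\tilde g_n) \to h$ by continuity of $\htop$. Applying Lemma~\ref{lem:strictlymonotone} along a short path from $\tilde g_n$ that monotonically enlarges the critical branch images (strictly creating new orbits), I obtain genuine points $g_n \in L_h \cap \Lambda$ with $g_n \to g$ and $g_n \notin \overline{\A}$. A refinement, using the density of $\bigcup_{k \ge 0} T^{-k}(\hat q)$ in $[-1,1]$ to match the kneading data of $g_n$ with deeper and deeper preimages of $\hat q$, organizes these points on a countable family of pairwise disjoint arcs $\gamma_n \subset L_h \cap (\Lambda \setminus \overline{\A})$, each of codimension one in $\Lambda$, which accumulate on the saddle-node arc and in particular on $g$.

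The failure of local connectedness now follows from the local picture. In a small neighborhood $U$ of $g$ in $\Lambda$, the set $L_h \cap U$ is the union of the two-dimensional cusp $\A \cap U$, the one-dimensional arcs $\gamma_n \cap U$, and the saddle-node arc $\partial \A \cap U$. Any path in $L_h \cap U$ from some $\gamma_n$ to $g$ must either (i) enter the interior of $\A$ and approach $g$ through the cusp, which by the $\delta$-pinching forces it to exit $U$ for $\delta$ small enough; or (ii) stay in $L_h \setminus \A$, where by Lemma~\ref{lem:strictlymonotone} $L_h$ is locally one-dimensional and the $\gamma_n$ are pairwise disjoint, so no such path exists. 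Hence the connected component of $g$ in $L_h \cap U$ contains no $\gamma_n$, while $\gamma_n \to g$; this is precisely non-local connectedness of $L_h$ at $g$. Non-local connectedness in the slice $\Lambda$ upgrades to non-local connectedness of $L_h$ in $\P^b_\shape$ because the frozen directions provide only a product structure compatible with path-components near $g$.

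\textbf{Main obstacle.} The central technical point is the cusp geometry of $\A$ at $g$: I must show that both unfolding directions of the saddle-node (moving $v_\eta$ or $v_{\eta+1}$ across the bifurcation) have nearly identical effect on the multiplier of the continuation of $q^*$ when $c_\eta$ and $c_{\eta+1}$ lie at distance $\le \delta|F|$ in the same fundamental domain, so that the two components of $\partial \A$ meet tangentially at $g$. This tangential pinching is what prevents a short path in $\A$ from bridging $g$ to the exterior inside $U$. A secondary, more routine difficulty is verifying that the kneading-matching construction in the second paragraph actually produces the $\gamma_n$ on the \emph{outside} of the cusp (rather than being asymptotically trapped inside $\overline{\A}$), which again uses the smallness of $\delta$ together with the hyperbolic dynamics of $T$ near $\hat q$.
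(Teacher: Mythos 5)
There is a genuine gap, and it sits exactly at the point your plan flags as the ``main obstacle''. The paper's mechanism for non-local connectedness is not the geometry of a cusp of the attractor region $\A$ at $g$; it is an \emph{entropy barrier}. The paper constructs open sets $W_n\subset L_h$ (hyperbolic maps whose two close critical points are captured by an attractor of period $nN+k$) accumulating at $g$, and then proves (Lemma~\ref{lem:tongues}) that along \emph{any} continuous path from $g$ to $W_n$ staying $\tau_0$-close to $g$ there is a parameter $s_n$ with $\htop(g_{s_n})>h_n$ for a fixed $h_n>h$ depending only on $n$; this uses Lemma~\ref{lem:strictlymonotone} at the parameters where $[c_\eta,c_{\eta+1}]$ is not absorbed by any periodic attractor and hence eventually covers a renormalization interval. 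Local connectedness would give a connected $X=L_h\cap Y$ containing $g$ and a point of $W_n$; thickening $X$ to a path-connected open $U\subset\{\htop<h_n\}$ (Lemma~\ref{lem:conn} plus continuity of entropy) and running a path in $U$ from $g$ to $W_n$ contradicts the barrier. Your argument never establishes such a quantitative lower bound $h_n-h>0$ on the entropy between $g$ and the accumulating pieces, and without it neither of your two cases closes: (i) a cusp of $\A$ at $g$ does not obstruct local connectedness --- since $g\in\partial\A$, paths inside $\A\cup\{g\}$ reach $g$ within arbitrarily small neighborhoods, so ``the path must exit $U$'' does not follow from pinching alone unless you additionally show the $\gamma_n$ attach to $\A$ only at a definite distance from $g$, which is again the barrier statement; (ii) ``$L_h$ is locally one-dimensional outside $\A$ and the $\gamma_n$ are disjoint, so no such path exists'' is not a proof --- disjoint arcs can perfectly well be joined through other parts of $L_h$ (e.g.\ through $\partial\A$ or through components you have not catalogued), and Lemma~\ref{lem:strictlymonotone} does not assert local one-dimensionality of an isentrope.

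Two secondary problems. First, you work in a two-dimensional slice $\Lambda$ and then claim the conclusion ``upgrades'' to $\P^b_\shape$ by a product structure; non-local connectedness of $L_h\cap\Lambda$ does not imply non-local connectedness of $L_h$, since connections between $g$ and the $\gamma_n$ could exist outside the slice, and $L_h$ is not literally a product near $g$. The paper avoids this by making the barrier lemma apply to arbitrary paths in the full space. Second, your production of the points $g_n\in L_h\setminus\overline\A$ via ``a short path that monotonically enlarges the critical branch images'' only gives entropy increase; to land exactly on $L_h$ you need an intermediate-value argument showing entropy passes through $h$, which is essentially what the paper does by locating the parameters $t_m$ (where the critical orbit is periodic of period $mN+k$, forcing $\htop=h$) interlaced with the parameters $s_m$ (where Lemma~\ref{lem:strictlymonotone} forces $\htop>h_m$).
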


We note that since $c_\eta,c_{\eta+1}$ are the only two critical points in the 
basin of $q^*$, 
there exists a fixed point $q$ of $g^N$ so that the component of the 
basin of $q^*$ 
containing $q^*$ is equal to  $(q,q^*]$, see Figure~\ref{fig:saddle-node}. 

The topological entropy of a map $g\in \P^b_\shape$ as in this theorem, 
is the same as that of  polynomial map $\hat g$  for which $c_\eta=c_{\eta+1}$ and which therefore has a 
degenerate critical point at $c_\eta=c_{\eta+1}$;  such a map $\hat g$ has $b-2$ turning points and therefore 
at most topological entropy $\log(b-1)$. Since this is the only
mechanism which we are aware of for obtaining non-local connectivity of isentropes, this explains the reason that we have to assume that $h\in [0,\log(b-1)]$
and can only  assert that there are non-locally connected isentropes in $\P^b_\shape$ when $b\ge 3$.

{\bf Remark:} 
The assumption that $\cup_{n \ge 0} T^{-n}(\hat q)$ is dense in $[-1,1]$
implies that if $T$ is renormalizable, say $U \supset T^m(U)$ for some non-degenerate interval $U$,
then $\hat q \in \orb(U)$. If the maximal such $m = p \cdot 2^r$ for some odd
integer $p \ge 3$, then 
$\htop(T) \ge 2^{-r} \log \lambda_p$ where $\lambda_p$
is the largest solution of $x^p - 2x^{p-2} = 1$ (see \cite[Theorem 4.4.17]{ALM}). Hence to realize small values of $h$, the integers $N$ and $k$ in condition C2 below must be multiples of $2^r$ for some large $r$.

Since the proof for $b=3$ (with $c_\eta = c_1$) is the same as the general case, we present here only that case.
For simplicity we shall also assume that $\shape=-1$.  
 
\subsection{Construction of a map \boldmath $g$ \unboldmath satisfying the assumptions of Theorem~\ref{thm:nonloc-sn}}
\label{subsec:construction}

\begin{figure}
\begin{center}
\unitlength=1.3mm
\begin{picture}(20,15)(10,5)
\put(-30,0){\resizebox{4cm}{4cm}{\includegraphics{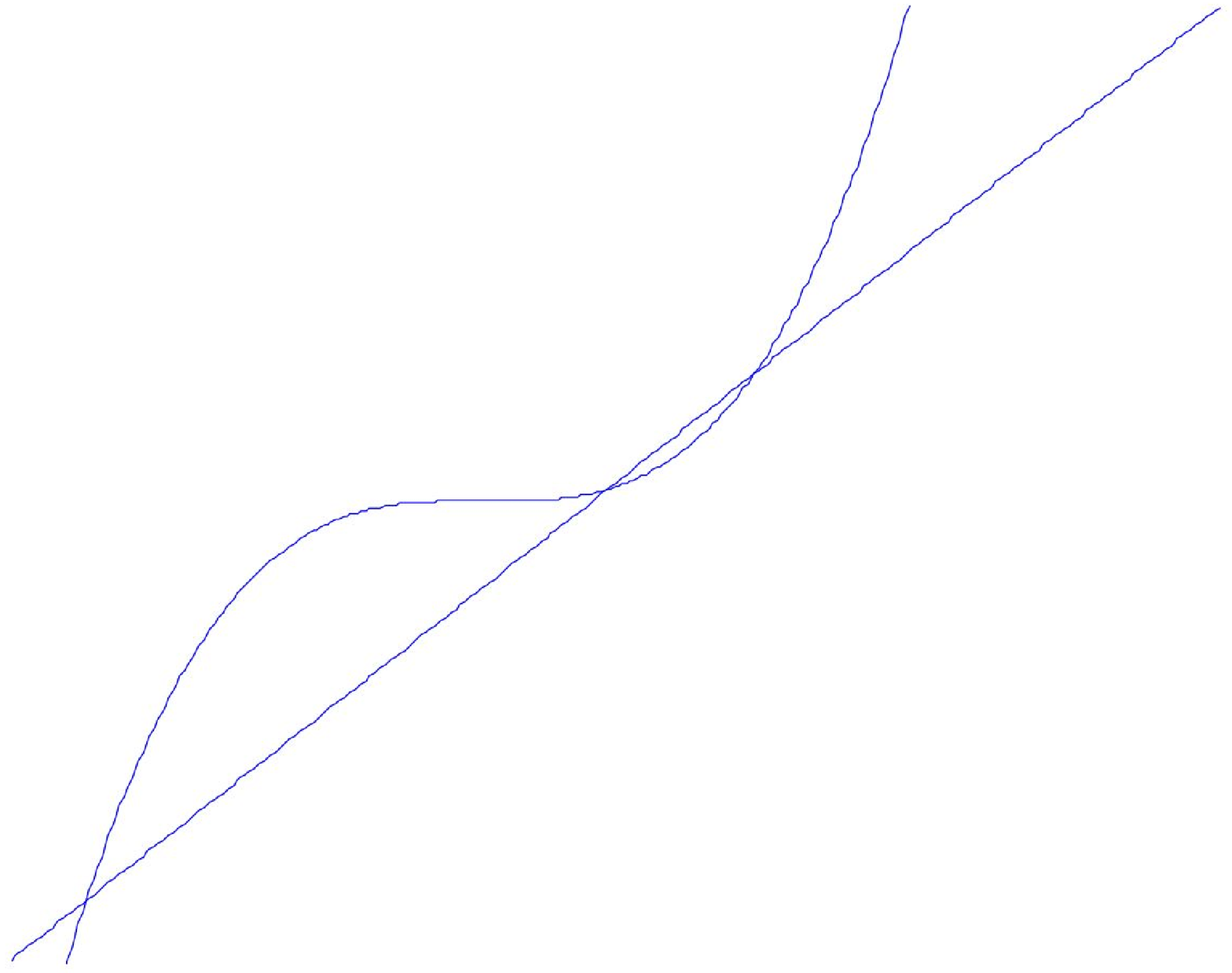}}}
\put(0,0){\resizebox{4cm}{4cm}{\includegraphics{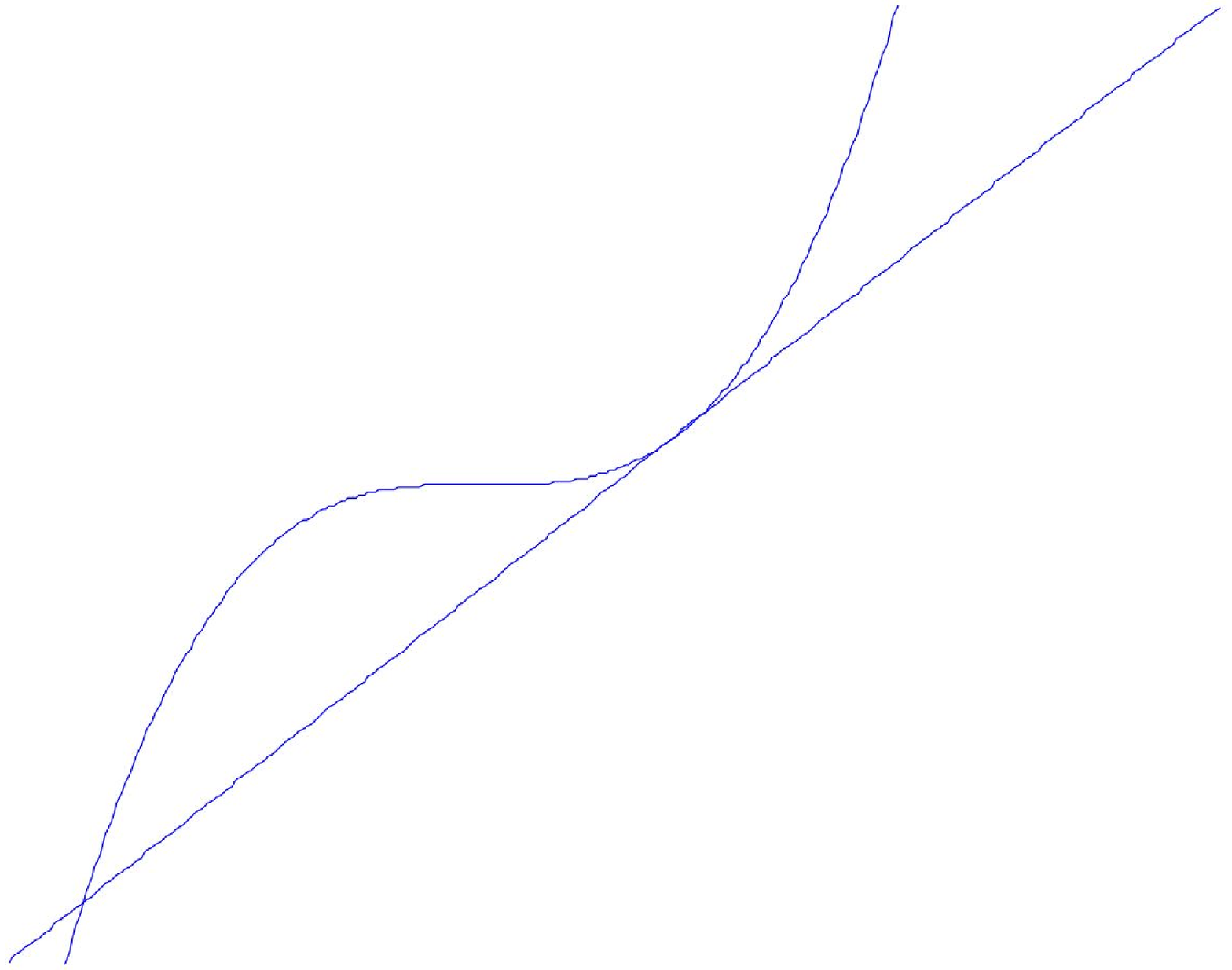}}}
\put(30,0){\resizebox{4cm}{4cm}{\includegraphics{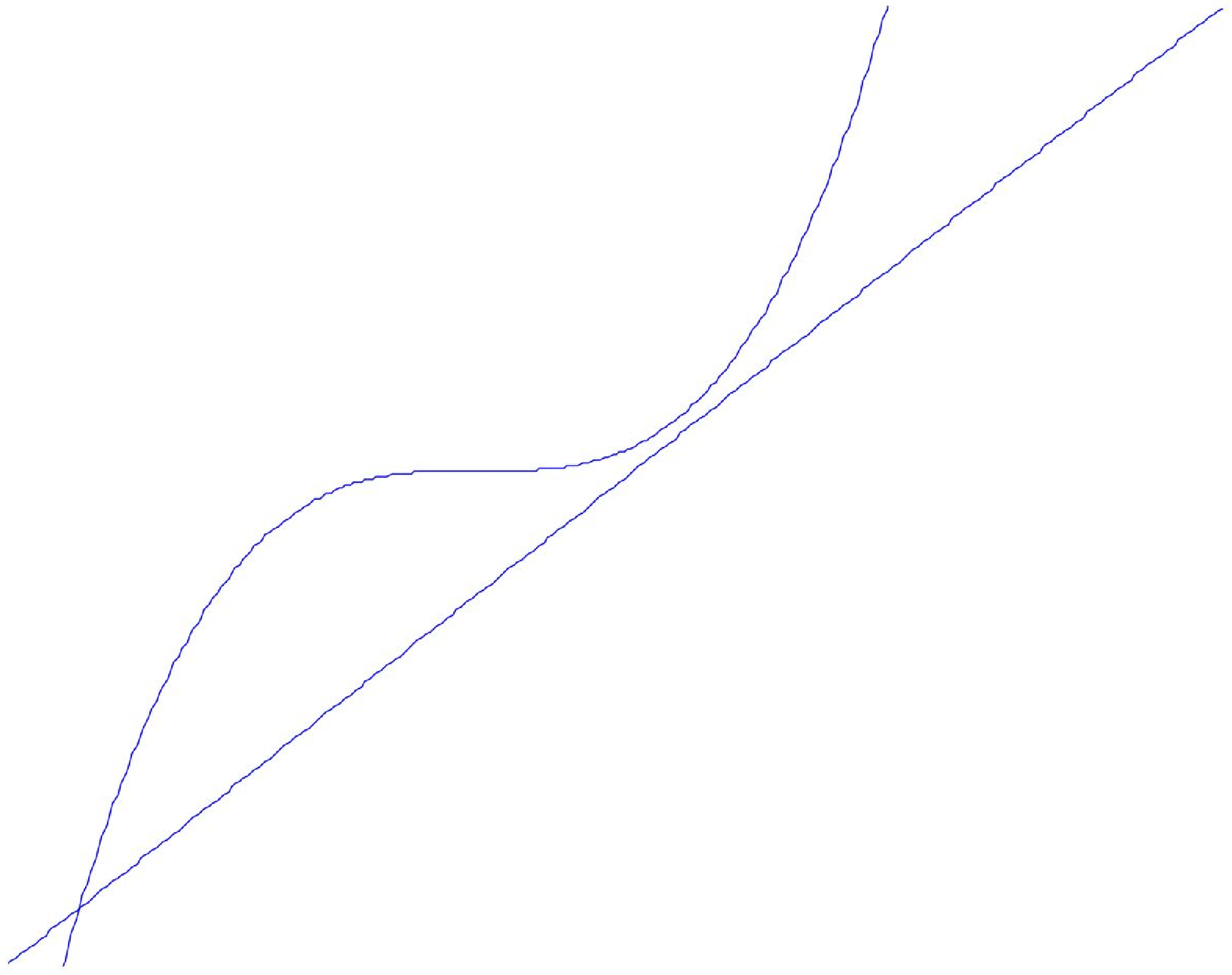}}}
\end{picture}
\end{center}
\caption{\label{fig:saddle-node} The graph of $\tilde g^N$ for $\tilde g$ near $g$. 
In the middle figure,  $g^N$ has a hyperbolic fixed point $q$ and a
saddle-node fixed point $q^*$.}
\end{figure}

To make the discussion rather explicit we will first show that maps satisfying 
the assumptions in Theorem~\ref{thm:nonloc-sn} exist.
To do this, choose a unimodal tent map 
(in the general case we choose a map with constant slope with  $b-2$ turning points and replace $\log 2$ below by $\log(b-1)$). 
 Let $h\in (0,\log 2)$ be so that there exist an integer $R$, $s>1$
and a map $T \in \T^3_\shape$ with slopes $\pm s$ (where $s=\exp(h)$) and 
turning point $c=0$ for which  $T^R(c)=c$. 
Note that the set of such $h$'s is dense in $[0,\log 2]$. 
For later use, pick any periodic point $\hat q$ of $T$ so that $\cup_{n\ge 0}T^{-1}(\hat q)$ is dense
and so that the orbit of $\hat q$ does not contain a turning point of $T$.
Let $N$ be its period. Given $\hat q$,  choose an integer $k$, an interval neighbourhood 
$\hat J \owns \hat q$ and  a point $\hat x$, so that  
\begin{enumerate}[topsep=-0.1cm,itemsep=-0.1ex,leftmargin=0.6cm]
\item[C1] $T^N|\hat J$ is monotone,
\item[C2] $\hat x\in T^N(\hat J)$ and there exists a neighbourhood $\hat V\owns \hat x$ so that $T^k$ 
maps $\hat V$ monotonically onto $\hat J$. 
\end{enumerate} Since $\cup_{n\ge 0}T^{-1}(\hat q)$ is dense, 
$k$, $\hat J$ and $\hat x$ always exist. 
We may assume that $T^N|\hat J$ is orientation preserving (otherwise replace $N$ by $2N$)
and in order to be definite we assume that  $\hat q$ lies to the left of the turning point
of $T$ and that $\hat x>\hat q$  (if $\hat x<\hat q$ then we replace $\sup$ by $\inf$ in the proof below).

Consider the space $\overline \P^b_\shape$ of real degree four polynomials 
with three critical points $-1\le c_1\le c_2<c_3<1$  and so that 
$f(\pm 1)=\shape=-1$. Note that $\overline \P^b_\shape$ can be parametrized by
the {\em critical values} $v_i=f(c_i)$ of $f$ and so corresponds to the space
$$
\{(v_1,v_2,v_3)\in \R^3; -1\le v_2\le  v_1\le 1, v_1>-1\mbox{ and }v_2<v_3\le 1\},
$$
see \cite[Section II.4]{MS} and also \cite{CS}.
Note that $\P^b_\shape$ consists of the polynomials in $\overline \P^b_\shape$ for which $v_2<v_1$ holds; by {\em singular} polynomials we mean those in
$\overline \P^b_\shape\setminus \P^b_\shape$.
The next three steps involve singular maps; in Step 4., we construct 
the required non-singular map from them.

\noindent
{\bf Step 1. Construction of a family of singular maps with the appropriate topological entropy.}
For any singular map $f \in \overline \P^b_\shape\setminus \P^b_\shape$ 
one has $c_1=c_2$ and therefore $f$ is unimodal. 
Since $v_3$ can take any value in $(-1,1]$ this forms a full family of 
unimodal maps
and therefore we can  pick $g_0\in\overline \P^b_\shape\setminus \P^b_\shape$ 
so that $g_0$ is semi-conjugate to $T$
and so that $g_0^R(c_3(g_0))=c_3(g_0)$. Since the semi-conjugacy only collapses
the basin of periodic attractor of $g_0$, we have $\htop(g_0)=h$.
Next define $\hat \Gamma = \{ g\in \overline \P^b_\shape\setminus \P^b_\shape \ : \ g^R(c_3(g))=c_3(g)$ and $\Gamma$ the component of $\hat \Gamma$ which 
contains $g_0$.
Note that $\Gamma$ is a real algebraic variety of  (real) dimension one. Moreover, by transversality, 
see \cite{epstein} and also  \cite{vS},  $\Gamma$ is a one-dimensional manifold.
As $g$ varies  in $\Gamma$ the critical points $c_1(g)=c_2(g)$ 
move from the left endpoint of $[-1,1]$ to $c_3(g)$, see also \cite{CS} . More precisely, 
one can take a one-parameter family of maps $f_t$, $t\in (0,1)$ parametrizing $\Gamma$,
so that as $t\downarrow 0$, $c_1(t)=c_2(t)\to -1$ and as $t\uparrow 1$ 
the distance of $c_1(t)=c_2(t)$ to $c_3(t)$ tends to zero.
For each $t\in (0,1)$ the map $f_t$ is semi-conjugate to $T$ by a
semi-conjugacy $H_t$  which only collapses the basin of the periodic attractors of $f_t$.
(In order to be definite we choose normalisations  so that $x\mapsto H_t(x)$ is increasing). 
Hence
$$
f\in \Gamma \implies \htop(f)=h.
$$

\noindent
{\bf Step 2. Construction of a singular map  \boldmath $f_{t'}$  within this family with a periodic point of saddle-node type
containing $c_1(t')=c_2(t')$  \unboldmath in its immediate basin.}
For each $t\in (0,1)$, the image $H_t(c_1(t))$ of the inflection point $c_1(t)=c_2(t)$
lies to the left of the turning point of $T$. Moreover, $t\mapsto H_t(c_1(t))$ is continuous
because the kneading invariant of $c_1(t)$ with respect to the partition $[-1,c_3(t)]$ and $[c_3(t),1]$
depends continuously on $t$ except when $c_1(t)$ is eventually mapped to $c_3(t)$.
However, even in that case $t\mapsto H_t(c_1(t))$ is continuous since $c_3(t)$ is a periodic 
attractor and hence a neighbourhood of $c_3(t)$ is mapped to the turning point of $T$. 

As mentioned above, as $t\in (0,1)$ varies from $0$ to $1$, 
we have that $c_1(t)=c_2(t)$ varies from the left endpoint of $[-1,1]$ to $c_3(t)$.
In particular, by continuity of $t\mapsto H_t(c_1(t))$, and since $\hat q$ 
is to the left of the turning point of $T$,   the set $A$ of parameters $t$ so that 
$H_{t}(c_1({t})) = \hat q$ is the $N$-periodic point of $T$ from above, 
is non-empty. Since $f_t$ is semi-conjugate
to $T$, for each $t\in A$, there exists an interval $J_t \subset f^N_t(J_t)$ 
so that $f_t^N|J_t$ is monotone and orientation preserving and 
$H_t(J_t)=\hat J$ is the interval associated to $T$ from above.
Let $U_t$ be the maximal interval in $J_t$ so that $H_t(U_t)=\hat q$. 
Since $T^N(\hat q)=\hat q$ and $f_t$ is semi-conjugate to $T$ we get
$f_t^N(U_t)=U_t$. Since $\hat q$ is contained in the interior of $\hat J$, 
the interval $U_t$ is compactly contained in $J_t$. Moreover, 
since $c_1(t)=c_2(t)\in U_t$, the interval $U_t$ is non-degenerate. 
Therefore and since $f_t^N|U_t$ is monotone, both endpoints of 
$U_t$ are fixed points under $f^N_t$ and $c_1(t)=c_2(t)$ are in the basin
of a periodic attractor within $U_t$. Define $t'=\sup A$. 
This means that $f_{t'}$ has an $N$-periodic parabolic point $q^*$ so that
$c_1(t')=c_2(t')$ are in its basin. 
(Otherwise $c_1(t')=c_2(t')$ would be in the basin 
of a hyperbolic periodic attractor of period $N$ contradicting that $t'=\sup A$.)
Let $q(t'),q^*(t')$ be the boundary points of $U_{t'}$, where
$q^*(t')$ is the parabolic point. The other, $q(t')$, is hyperbolic repelling,
since $f_{t'}$ has negative Schwarzian derivative, and both have period $N$.
As $t$ varies from $0$ to $1$, \,\, $H_t(c_1(t))$ varies from $-1$ to the turning point of $T$.
That $t'=\sup A$ implies that $H_t(c_1(t))>\hat q$ for $t>t'$.  Since $H_{t'}(q(t'))=\hat q$
it follows that  $q(t')<c_1(t')=c_2(t')$. Since  $f_t^N|J_t$ is monotone increasing
we therefore get $q(t')<c_1(t')=c_2(t')<q^*(t')$.

Furthermore, again because $f_{t'}$ is semi-conjugate to $T$,
there exists a point $x(t')\in f_{t'}^N(J_{t'})$ so that $H_{t'}(x(t'))=\hat x$ and so that there exists an interval
$V_{t'}\ni x(t')$ so that  $f_{t'}^k$ maps $V_{t'}$ diffeomorphically onto $J_{t'}$, 
where $k$ and $\hat x$ are associated to $T$ as above. Since $t'=\sup A$,
\begin{enumerate}[topsep=-0.1cm,itemsep=0.3ex,leftmargin=0.6cm]
\item  for each $t\in [t',1)$ the map  $f^N_{t}|J_t$ has a hyperbolic repelling fixed point $q(t)\in J_t$; 
\item  for any $t\in (t', 1)$ and any $y\in J_t$ with $y>q(t)$ one has $f^N_t(y)>y$;
\item $f^N_{t'}$ has a parabolic fixed point $q^*(t')\in J_{t'}$;
\item $J_t$ contains $q(t')<c_1(t')=c_2(t')<q^*(t')$ for each $t$ near $t'$.
\end{enumerate}
This shows that $f_{t'}$ has a saddle-node periodic point of the required type. 

\noindent 
{\bf Step 3. Construction of a sequence of singular maps 
 \boldmath $f_{t_n}$ with $t_{n}\downarrow t'$
and for which $c_1(t_n)=c_2(t_n)$  \unboldmath
are attracting periodic orbits.}
Since $T^k\colon \hat V\to \hat J$ is monotone, there exists a sequence
of periodic points $\hat x_n\in \hat J$, so that
$$\hat x_n,T^N(\hat x_n),\dots,T^{(n-1)N}(\hat x_n)\in \hat J\mbox{ , }
T^{nN}(\hat x_n)\in \hat V\mbox { and }T^{nN+k}(\hat x_n)=\hat x_n.$$  Of course $\hat x_n\downarrow \hat x$
as $n\to \infty$. 
It follows that there exists $t_n\in (t',1)$ so that $H_{t_n}(c_1(t_n))=H_{t_n}(c_2(t_n))=\hat x_n$ and so that
$f_{t_n}^{nN+k}(c_1(t_n))=c_1(t_n)$. 

We claim that $t_n\downarrow t'$. Indeed,  $n$ iterates of $c_1(t_n)$
under the monotone map $f_{t_n}^N|J_{t_n}$ stay in $J_{t_n}$. 
It follows that there exist $0<k_n<n$ so that $|f_{t_n}^{k_nN}(c_1(t_n))-f_{t_n}^{(k_n+1)N}(c_1(t_n))|\to 0$
as $n\to \infty$. Since $q(t)$ is a repelling $N$-periodic point for each $t\in (t',1)$, 
the critical point $c_1(t_n)$ cannot converge to $q(t)$ as $n\to \infty$. 
Therefore $|f_{t_n}^{k_nN}(c_1(t_n))-f_{t_n}^{(k_n+1)N}(c_1(t_n))|\to 0$ implies that
$f_{t_n}^{N}|J_{t_n}$ has an {\lq}almost fixed point{\rq} when $n$ is large. 
Since $f^N_{t}|J_t$ has only one fixed point as $t>t'$ (namely $q(t)$), it follows that $t_n\to t'$.

Again there exists a sequence of parameter intervals $A_n\ni t_n$
so that $c_1(t_n)=c_2(t_n)$ belongs to the basin of an periodic attractor of period $nN+k$.
The parameter intervals $A_n=[t_n^-,t_n^+]$ are disjoint and converge to $t'$.
Here $t_n^\pm$ are parameters at which $f^{nN+k}_{t_m^\pm}$ has a parabolic fixed point
(this holds because for each $t\in [t_n^-,t_n^+]$, the map $f^{nN+k}_t$ is monotone on the component of the basin
of the periodic attractor containing $c_1(t_n)=c_2(t_n)$).

\noindent 
{\bf Step 4. Construction of a non-singular map which satisfies the assumptions
of Theorem~\ref{thm:nonloc-sn}.}
The singular map $f_{t'}$ has a parabolic $N$-periodic point and there exists a codimension one algebraic
set $\Sigma(t') \ni f_{t'}$ of maps in $\overline \P^b_\shape$ which also have a parabolic periodic point
of period $N$. Similarly, there exist codimension one algebraic sets $\Sigma(t_n^\pm)$ 
containing $f_{t_n^\pm}$ with a parabolic periodic point of period $nN+k$.  
Using the rigidity results contained in \cite{KSS} and \cite{BS}, one can prove  that 
$\Sigma(t')$ and $\Sigma(t_n^\pm)$ are codimension-one manifolds of $\P^b_\shape$ and that $\Sigma(t_n^\pm)$
converges to $\Sigma(t')$. So for maps corresponding to the region between $\Sigma(t_n^-)$ and $\Sigma(t_n^+)$ 
the critical points $c_1,c_2$ are both contained in the same component 
of the immediate basin of a periodic point of period $nN+k$.
So the situation is somewhat analogous to that of rational Arnol'd tongues for circle diffeomorphism (except that 
the regions between   $\Sigma(t_n^-)$ and $\Sigma(t_n^+)$ do not touch). 
The map $g$ we are looking for lies in the set $\Sigma(t')$. 
Rather than proving that $\Sigma(t')$ is a manifold in this paper,
to show that the required $g$ exists,  it will be sufficient to use much softer arguments. 

Indeed, take $t^-<t'<t^+$ so that $f^N_{t^-}$ has a hyperbolic attracting fixed near $q^*(t')$ and 
$f^N_{t^+}$ has no fixed point near $q^*(t')$. Moreover, let $g_t\in \P^b_\shape$, $t\in [t^-,t^+]$ be a family 
which depends continuously on $t$ and so that $d(f_t,g_t)<\delta'$ for each $t\in [t^-,t^+]$
(where $d(f,g)$ is the Euclidean distance of the coefficients of the two polynomials).
Provided $\delta'>0$ is sufficiently small, there exists $t_p\in [t^-,t^+]$ so that 
$g:=g_{t_p}$ has a parabolic $N$-periodic point, so that
$g_{t}$ has no parabolic periodic point of period $\le N$ for $t\in (t^-,t^+]$ 
and so that $g$ satisfies the properties
required in the assumption of Theorem~\ref{thm:nonloc-sn}.

\subsection{Proof of Theorems~\ref{thm:nonloc-sn} and \ref{thm:isentropeNLC}}

Before proving  Theorems~\ref{thm:nonloc-sn} and \ref{thm:isentropeNLC}, we 
state and prove some lemmas.

Let $f_{t_n}$ be the singular maps constructed in Step 3.\ above. 
Take $f_n\in \P^b_\shape$ sufficiently close to $f_{t_n}$ so that $f_n$ has a periodic attractor of
$nN+k$ which still attracts the two critical points $c_1$ and $c_2$ and so that, moreover, $c_3$ is still in the basin
of a periodic attractor of period $R$. 

Set
\begin{equation}\label{eq:Wn}
W_n := \{ \varphi \in  \P^b_\shape \ : \
\varphi \text{ is hyperbolic and partially conjugate to } f_n\},
\end{equation} 
where we say that 
$\varphi$ and $f_n$ are {\em partially conjugate} if there exists an 
orientation preserving homeomorphism $h$ 
(depending on $\varphi$ and $f_n$) so that $h\circ f_n=\varphi\circ h$ 
holds outside the basin the periodic attractors of $f_n$.

\begin{lemma}
The set $W_n$ is homeomorphic to an open three-dimensional ball. 
\end{lemma}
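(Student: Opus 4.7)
The plan is to exhibit $W_n$ as a real hyperbolic component of the three-dimensional space $\P^3_\shape$, parametrized by invariants attached to the two attracting cycles of $f_n$.

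First I would verify that $W_n$ is open. Hyperbolicity is an open condition, and the immediate basins of the attracting cycles depend continuously on the map, so for $\tilde\varphi\in\P^3_\shape$ close to some $\varphi\in W_n$ one still has two attracting cycles of periods $nN+k$ and $R$ whose immediate basins contain $\{c_1,c_2\}$ and $\{c_3\}$ respectively; structural stability on the complement of the basins then extends the partial conjugacy, so $\tilde\varphi\in W_n$.

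Next I would construct an explicit parametrization
$$
\Phi\colon W_n \longrightarrow (-1,1)\times(-1,1)\times(0,1),\qquad
\Phi(\varphi)=(\mu_1(\varphi),\mu_2(\varphi),\rho(\varphi)),
$$
where $\mu_1,\mu_2$ are the multipliers of the period-$(nN+k)$ and period-$R$ cycles, and $\rho$ measures the normalized position of $c_2$ relative to $c_1$ inside a fundamental domain of the immediate basin of the first cycle, read off in a continuously chosen K\"onigs / B\"ottcher linearizing coordinate. Continuity of $\Phi$ is immediate from continuous dependence of cycles, multipliers, and linearizing coordinates on the polynomial coefficients.

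The core task is to show that $\Phi$ is a homeomorphism onto an open three-dimensional ball. For injectivity, two maps $\varphi,\varphi'\in W_n$ with $\Phi(\varphi)=\Phi(\varphi')$ are partially conjugate to $f_n$ with matching multipliers and critical markings; the real-polynomial rigidity results of \cite{KSS,BS}, which apply precisely because every critical point lies in an attracting basin, upgrade this to an affine conjugacy, and the defining normalization of $\P^3_\shape$ then forces $\varphi=\varphi'$. For surjectivity, I would realize any target triple by a quasiconformal deformation of $f_n$ whose Beltrami differential is supported on the union of the two basins, independently tuning each multiplier and then moving $c_2$ within the fundamental domain; the measurable Riemann mapping theorem followed by renormalization into $\P^3_\shape$ produces the desired map. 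Invariance of domain in $\R^3$ finally yields continuity of $\Phi^{-1}$.

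The main obstacle is surjectivity, which must be implemented so as to remain within the real polynomial slice $\P^3_\shape$ and to land inside $W_n$ rather than on its boundary. An alternative that avoids qc surgery is to prove that $\Phi$ is proper onto its image, observing that every way of leaving $W_n$ --- a multiplier reaching $\pm 1$, a critical point escaping an immediate basin, or the collision $c_1=c_2$ --- forces $\Phi$ to approach $\partial\bigl((-1,1)^2\times(0,1)\bigr)$, and then invoke invariance of domain together with injectivity to conclude that $\Phi(W_n)$ is the full open cube.
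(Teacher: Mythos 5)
The paper does not actually prove this lemma: it simply observes that it is a special case of \cite[Theorem 2.2]{BS} (hyperbolic components of these real polynomial families are open topological cells) and adds only the side remark that the $W_n$ are pairwise disjoint because their attractors have different periods. What you have written is essentially a reconstruction of the standard proof of that cited theorem: openness from structural stability off the basins, a three-parameter parametrization by the two multipliers together with the normalized position of the second critical point in the basin component shared by $c_1$ and $c_2$ (the count $2+1=3$ is correct), injectivity from the rigidity results of \cite{KSS}, surjectivity by quasiconformal deformation supported in the basins, and invariance of domain. This is the right strategy and contains no wrong turn, but the two points you flag as obstacles are exactly where the content of \cite[Theorem 2.2]{BS} lies, and your sketch does not yet close them. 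First, a ``continuously chosen K\"onigs / B\"ottcher coordinate'' does not exist across the superattracting locus $\mu_1=0$, where $c_1$ is periodic and the K\"onigs linearization degenerates; one must replace the pair (multiplier, position) by coordinates that are manifestly continuous through the center of the component, e.g.\ by comparing with an explicit model family. Second, the quasiconformal surgery must use a Beltrami coefficient invariant under complex conjugation so that the straightened map lands back in the real slice $\P^b_\shape$ with the correct normalization $f(\pm1)\in\{-1,1\}$, and your properness alternative still requires verifying that the degenerations you list are the only ways to leave $W_n$, which uses that \emph{every} critical point lies in an attracting basin. So your route is the proof the paper outsources; as written it is a correct outline with the hard analytic steps still deferred, exactly as the paper defers them to \cite{BS}.
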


\begin{proof}
This lemma is special case of what is proved in \cite[Theorem 2.2]{BS}. 
Note that  the sets $W_n$ are pairwise disjoint since each these sets correspond to maps with periodic attractors
of different periods, namely $nN+k$.
\end{proof}

As mentioned above, the boundary of $W_n$ consists of smooth codimension one surfaces in $\P^b_\shape$, but we shall not use or prove this fact here. 

Let $n$ be any integer.  In the next lemma we will show that entropy is oscillating along any path of maps
 $g_t$ which connects a map in $W_n$ (from \eqref{eq:Wn}) with $g$
 where $g$ is the map from Theorem~\ref{thm:nonloc-sn}.

\begin{lemma}\label{lem:tongues}
Take $h,T$ and $N$ as in the assumptions of Theorem~\ref{thm:nonloc-sn}.
Then there exists $\delta_0>0$, so that for any
$g\in \P^b_\shape$ which satisfies conditions (1)-(5) from that theorem with $\delta\in (0,\delta_0)$, there exist $h_0>h_1>h_2>\dots>h$ and $\tau_0>0$ 
so that
for {\em any} unfolding family $g_t$ for which $d(g_t,g)<\tau_0$ for each $t\in [0,1]$
and  with $g_{t_n}\in W_n$ for some $t_n\in (0,1]$ there exist parameters  $t_n>s_n>t_{n+1}>s_{n+1}> \dots >0$ 
so that for each $m\ge n$ the following holds:
\begin{itemize}[topsep=-0.1cm,itemsep=0ex,leftmargin=0.8cm]
\item[(A)]  $g_{t_m}\in W_m$  and therefore $\htop(g_{t_m})=h$;
\item[(B)]  $\htop(g_{s_m})>h_m>h$.
\end{itemize}  
\end{lemma}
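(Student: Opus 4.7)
The plan is to exploit the tongue structure established in Section~\ref{subsec:construction}: the regions $W_m$ accumulate on $g$ as $m\to\infty$, and in the ``interstitial'' gaps between consecutive tongues the critical points escape their trapping regions, forcing the entropy to rise strictly above $h$. So the proof will combine a topological/transversality argument---showing the path $g_t$ cannot avoid the tongues as $t$ approaches $g$---with Lemma~\ref{lem:strictlymonotone}, which quantifies the entropy jump.

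First I would set up the accumulation of tongues. The singular parameter intervals $A_m=[t_m^-,t_m^+]$ from Step~3 of Section~\ref{subsec:construction} cluster down on $t'$, and their corresponding codimension-one algebraic hypersurfaces $\Sigma(t_m^\pm)\subset\overline \P^b_\shape$ converge to $\Sigma(t')\ni g$. Invoking the rigidity results of \cite{KSS,BS} cited in Step~4, one obtains that the $\Sigma(t_m^\pm)$ are genuine codimension-one manifolds near $g$, so each $W_m$ sits as an open set sandwiched between two nearby codimension-one sheets. Choosing $\tau_0$ small enough, any path $g_t$ inside the $\tau_0$-ball around $g$ that reaches $W_n$ must cross the boundaries of every intermediate tongue $W_m$ for $m\ge n$; picking one such crossing as $t_m$ gives (A), since a map in $W_m$ is partially conjugate outside its attractor basins to $f_m$ and therefore inherits entropy $h$.

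Next I would locate $s_m$ strictly between $t_m$ and $t_{m+1}$ in the interstitial region, where $g_{s_m}$ lies outside $\overline{W_m\cup W_{m+1}}$. For such parameters the period-$(mN+k)$ attractor has disappeared via a saddle-node bifurcation on $\Sigma(t_m^\pm)$: the fundamental domain that trapped $c_1,c_2$ unfolds, and some interval around it is pushed strictly beyond itself under $g_{s_m}^{mN+k}$. I would then apply Lemma~\ref{lem:strictlymonotone} with $T$ the fixed constant-slope tent model of entropy $h$, with $f$ a reference hyperbolic map in $W_m$ (for which the semi-conjugacy collapses only basins and plateaus), and with $\tilde f=g_{s_m}$. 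On a neighbourhood $U$ of the former attracting fundamental domain, the monotone laps of $g_{s_m}^{mN+k}$ cover strictly more than those of $f$, with relative growth bounded below by a constant $\kappa_m>0$ controlled by the geometry of the singular tongue $A_m$. Condition~(5) of Theorem~\ref{thm:nonloc-sn}, combined with the density of $\bigcup_n T^{-n}(\hat q)$, prevents the new piece $\tilde f(U)\setminus f(U)$ from being absorbed into a cycle of periodic intervals, attracted to a periodic attractor, or contained in a wandering interval. Lemma~\ref{lem:strictlymonotone} then yields $\htop(g_{s_m})>(1+\xi_m)\,h=:h_m>h$, and the $h_m$ form a strictly decreasing sequence with limit $h$.

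The main obstacle I anticipate is producing the lower bound $\kappa_m>0$ on the relative lap growth \emph{uniformly} over all admissible paths $g_t$ (given only $d(g_t,g)<\tau_0$); this reduces to a quantitative width estimate for the gap between the tongue boundaries $\Sigma(t_m^+)$ and $\Sigma(t_{m+1}^-)$ in the transversal direction of the path, which in turn is a quantitative version of the convergence $\Sigma(t_m^\pm)\to\Sigma(t')$. A secondary technical point is engineering the pointwise containment $\tilde f(J_i)\supset f(J_i)$ required by Lemma~\ref{lem:strictlymonotone}, since $g_{s_m}$ and the chosen reference $f\in W_m$ are \emph{different} maps; a natural workaround is to take $f$ to be the maximal-entropy boundary map on $\Sigma(t_m^-)$, where the only difference from $g_{s_m}$ is the opening of the saddle-node and the consequent unfolding of the fundamental domain $F_m$.
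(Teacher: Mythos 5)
Your global picture (tongues $W_m$ accumulating at $g$, entropy equal to $h$ on the tongues and jumping in the gaps, with Lemma~\ref{lem:strictlymonotone} quantifying the jump) is the right one, but there is a genuine gap exactly where you apply Lemma~\ref{lem:strictlymonotone}. You take $f$ to be a \emph{different} polynomial (a reference hyperbolic map in $W_m$, or a map on the tongue boundary $\Sigma(t_m^-)$) and $\tilde f=g_{s_m}$; as you yourself half-acknowledge, the hypothesis $\tilde f(J_i)\supset f(J_i)$ for \emph{every} lap then has no reason to hold, and your proposed workaround does not repair it: a map on $\Sigma(t_m^-)$ and $g_{s_m}$ are two distinct polynomials in the family and differ everywhere, not merely near the saddle-node, so their branch images do not nest. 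The device you are missing is to compare $g_{s_m}$ with its \emph{own} stunted version $G_{s_m}$, obtained by replacing $g_{s_m}$ on $[c_1(s_m),\hat c_1(s_m)]$ (where $g_{s_m}(\hat c_1)=g_{s_m}(c_1)$) by the constant value $g_{s_m}(c_1)$. Then the two maps agree off that interval, the containment of lap images is automatic, $G_{s_m}$ is semi-conjugate to $T$ so $\htop(G_{s_m})=h$, and the uniform lower bound $|F_t|\ge\rho_0$ on the size of the fundamental domain (the paper's Claim, proved from the fact that $c_1$ cannot be attracted to a fixed point of $g_t^N|J_t$ and that $g_t^{nN}(F_t)$ is about to leave $J_t$) supplies exactly the uniform $\kappa$ that you flag as your main obstacle --- no quantitative tongue-width estimate is needed. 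The third hypothesis of Lemma~\ref{lem:strictlymonotone} is then verified not via condition (5) but by choosing $s_m$ in the nonempty complement of the parameter intervals on which $c_1$ is attracted, whence $[c_1(s_m),c_2(s_m)]$ is eventually mapped over a full component of the deepest renormalization cycle.

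A second, lesser problem is part (A). You derive the crossings $t_m$ from the assertion that the $\Sigma(t_m^\pm)$ are codimension-one manifolds separating the $\tau_0$-ball, invoking the rigidity results of \cite{KSS} and \cite{BS}; the paper explicitly declines to prove this structure and uses a softer route: the stunted family $G_t$ is semi-conjugate to $T$ via $H_t$, the point $t\mapsto H_t(c_1(t))$ moves continuously, and an intermediate-value argument as in Step 3 of the construction produces parameters $t_m$ at which $c_1$ is superattracting of period $mN+k$. Even granting your separation claim, entering the region between the two sheets only controls $c_1$; what actually places $g_{t_m}$ in $W_m$ is the additional estimate that $|Dg_{t_m}^{mN+k}|<1$ on the interval joining $c_1$ and $c_2$ (again a consequence of $|F_t|\ge\rho_0$ and the fact that $k$ is independent of $t$), which forces $c_2$ into the same basin once $\delta$ is small. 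That step is absent from your proposal.
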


{\bf Remark:} Provided $\tau_0>0$ is sufficiently small, $g_t\in \P^b_\shape$
is non-singular for each $t\in [0,1]$.

\begin{proof} Let us again assume that $b=3$.  For $\delta>0$ sufficiently small, 
all critical points $c_3,\dots,c_b$ are still in the basins of 
hyperbolic periodic attractors. 
Define  $\hat c_1(t)\in (c_2(t),c_3(t))$ so that $g_t(\hat c_1(t))=g_t(c_1(t))$
and define 
$$
G_t(x) = \begin{cases} 
g_t(c_1) & \text{ if  } x \in [c_1(t),\hat c_1(t)]; \\
g_t(x) & \text{ otherwise.  }
\end{cases}
$$
Since the kneading invariants of $G_t$ and
$T$ for the critical points $c_3,\dots,c_b$ are the same, there exists a 
semi-conjugacy $H_t$ between $G_t$ and $T$. 
Let $\hat q=H_t(q^*)$ and let $k$, $\hat J$ and $\hat x$ be as in Step 1 
in Section~\ref{subsec:construction}. Moreover, 
choose an interval $J_t\ni q^*$ so that $H_t(J_t)=\hat J$ and take $x_t\in g^N_t(J_t)$ so that $H_t(x_t)=\hat x$.
As in Step 3 of Section~\ref{subsec:construction}, 
there exists parameters $t_n>t_{n+1}>\dots >0$ so that 
for $m\ge n$ one has $g_{t_m}^{mN+k}(c_1(g_{t_m}))=G_{t_m}^{mN+k}(c_1(g_{t_m}))=c_1(g_{t_m})$. 
Provided $\delta>0$ is sufficiently  small, 
for each $t\in [0,1]$ there exists a fundamental domain
(recall Definition~\ref{def:fundamental_domain})
 $F_t\subset J_t$  containing $c_1(g_t)$ and $c_2(g_t)$.
Now take $t>0$ and $n$ so that  $F_t,\dots,g_t^{(n-1)N}(F_t) \subset J_t$ 
have disjoint interiors and so that  $x_t\in g_t^{nN}(F_t)$.  

{\bf Claim:} There exists $\rho_0>0$ so that $|g_t^{nN}(F_t)|, |F_t|\ge \rho_0$ for each $t>0$. 

Indeed, let $U_0$ be the maximal neighbourhood of $c_1(t)$
on which $|Dg_t(x)| < 1$, and let $U_1 \owns c_1(t)$ be a subinterval 
of diameter $\frac14 |U_0|$.
If $F_t \subset U_1$, then since also $g_t^N(F_t)$ is adjacent to $F_t$
(recall that it is a fundamental domain)
$c_1(t)$ is contained in the basin of an attracting fixed point of 
$g_t^N|J_t$, contradicting assumption (c) on the family $g_t$. 
That $|g_t^{nN}(F_t)|$ is not small holds because this interval is about to ``leave $J_t$'' (in less than $k$ steps, where $k$ does not depend on $n$). 

The previous claim implies that  $Dg_t^{mN}|F_t$ is bounded uniformly in $t>0$ for each $m\ge n$.
Since $k$ does not depend on $t$, it follows that  $Dg_{t_n}^{mN+k}|F_t$ is also 
bounded uniformly in $t>0$. In particular, 
when $\delta>0$ is sufficiently small, 
$|Dg_{t_m}^{mN+k}|<1$ on the interval connecting $c_1(g_t)$ and $c_2(g_t)$
and therefore  $c_2(g_t)$ is contained in the basin of attracting periodic orbit $c_1(g_t)$ when $t=t_m$.
This shows that $g_{t_m}\in W_m$ and therefore $\htop(g_{t_m})=h$.
In other words, the family $f_t$ passes through $W_m$, which means that 
the {\lq}teeth of the comb{\rq} in the $h$-isentrope don't decrease in height as $m \to \infty$.  

There exists a sequence $s_n>s_{n+1}>\dots>0$ so that $t_n>s_n>t_{n+1}>s_{n+1}>\dots>0$ and so that $c_1(g_{s_m})$
is not in the basin of a periodic attractor of  $g_{s_m}$ for $m\ge n$. This holds because the set of parameters $s$
for which  $c_1(g_s)$ is contained 
in the basin of a periodic attractor of $g_s$ is a union of disjoint intervals with disjoint closures, and therefore the complement has to be non-empty.
But as $[c_1(s_m), c_2(s_m)]$ is not asymptotic to a periodic orbit, it must eventually be mapped onto an entire interval component of the `deepest' renormalization cycle of $g_{s_m}$. 
It follows from Lemma~\ref{lem:strictlymonotone} that 
$\htop(g_{s_n})>h_n>h=\htop(G_{s_n})$
and that moreover $h_n$ only depends on $f$ and $\tau_0$.
\end{proof}

\begin{lemma}\label{lem:conn}
Let $X$ be connected subset of $\R^b$. 
Then for each $\tau>0$ there exists an open set $U$ with $X\subset U \subset N_\tau(X)=\{y \ : \ d(y,X)<\tau\}$ which is path connected.
\end{lemma}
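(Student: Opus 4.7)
The plan is to take $U = N_\tau(X)$ itself. This set obviously satisfies $X \subset U \subset N_\tau(X)$, and it is open since it is a union of open balls: $N_\tau(X) = \bigcup_{x \in X} B(x,\tau)$. Thus the only thing to verify is that $U$ is path connected.

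First I would show that $U$ is connected, and then upgrade this to path connectedness. For connectedness, given any $y \in U$ choose $x(y) \in X$ with $|y-x(y)| < \tau$, and consider the closed line segment $[y,x(y)]$. Every point on this segment is within distance $\tau$ of $x(y) \in X$, so $[y,x(y)] \subset B(x(y),\tau) \subset U$. The set $S_y := X \cup [y,x(y)]$ is then connected because it is the union of two connected sets sharing the point $x(y)$. Since $U = \bigcup_{y \in U} S_y$ is a union of connected sets all containing the common connected subset $X$, the set $U$ is connected.

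To upgrade to path connectedness, I would invoke the standard fact that a connected open subset of $\mathbb{R}^b$ is path connected. This follows because $\mathbb{R}^b$ is locally path connected, so $U$ is locally path connected; the path components of a locally path connected space are open, and a connected space cannot be partitioned into disjoint nonempty open sets, so $U$ has a single path component.

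The main subtlety is that $X$ itself need not be path connected (think of the topologist's sine curve), so one cannot construct a path in $U$ between two arbitrary points simply by concatenating two straight segments to endpoints in $X$ with a path inside $X$. The argument above sidesteps this by first establishing mere connectedness of the enlargement $U$ and then exploiting the ambient local path connectedness of $\mathbb{R}^b$ to promote this to path connectedness.
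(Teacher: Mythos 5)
Your proof is correct and follows essentially the same route as the paper: take $U$ to be a union of open balls centered at points of $X$ (the paper allows radii $\tau(x)\le\tau$, you take them all equal to $\tau$ so that $U=N_\tau(X)$), observe that $U$ is open and connected because each ball meets the connected set $X$, and conclude path connectedness from the standard fact that connected open subsets of $\R^b$ are path connected. Your write-up just supplies more detail for the connectedness step than the paper does.
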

\begin{proof} For each $x\in X$, choose $\tau(x)>0$ and 
let $U=\cup_{x\in X}B_{\tau(x)}(x)$. Since $X$ is connected, so is $U$. 
Since $U$ is open, it is also path connected.
\end{proof}

Now we are ready to prove the main theorems.

\begin{proof}[Proof of  Theorems~\ref{thm:nonloc-sn}]
Take $g$ and $\tau_0$ be as in the assumption of Theorem~\ref{thm:nonloc-sn}.
Suppose by contradiction that $L_h$ is locally connected at $g$.  Then, by definition,  
there exists an open set $Y$ which is contained in a $\tau_0/2$-neighbourhood of $g$
so that $X:=L_h\cap Y$ is connected.
 By Lemma~\ref{lem:tongues} for $n$ sufficiently large, 
$W_n\cap X\ne \emptyset$.  Pick  such an integer  $n$ and let $h_n>h$
be as in Theorem~\ref{thm:nonloc-sn}. 
By Lemma~\ref{lem:conn}, there exists a path connected open set $U\supset X$
which is contained in $\{\tilde g; \htop(\tilde g)<h_n\mbox{ and }d(\tilde g,X)<\tau_0/2\}$.
So choose a path $(g_t)_{t\in [0,1]}$ in $U$ so that $g_0=g$ and $g_1\in W_n\cap X$.
In particular,  $\htop(g_t)<h_n$ and $d(g_t,g)<\tau_0$ for each $t\in [0,1]$.
However, by Lemma~\ref{lem:tongues}, there exists  $s_n\in (0,t_n)$ (where we can take $t_n=1$)
so that $\htop(g_{s_{n}})>h_{n}$. This contradicts that $\htop(g_t)<h_{n}$ for each $t\in [0,1]$.
Thus we have shown that $L_h$ is not locally connected at $g$.
\end{proof}

\begin{proof}[Proof of Theorem~\ref{thm:isentropeNLC}]
This immediately follows from Theorem~\ref{thm:nonloc-sn}.
\end{proof}

\medskip
\noindent
Faculty of Mathematics,  University of Vienna,\\  
Nordbergstra{\ss}e 15, 1090 Vienna, Austria.\\
\texttt{henk.bruin@univie.ac.at}\\
\texttt{http://www.mat.univie.ac.at/$\sim$\,bruin}

\noindent
Department of Mathematics, Imperial College,\\
180 Queen's Gate, London SW7 2AZ, UK.\\
\texttt{s.van-strien@imperial.ac.uk}\\
\texttt{http://www2.imperial.ac.uk/$\sim$\,svanstri/}

\end{document}